\newcommand{\bfX}{\mathbf{X}}
\newcommand{\bfY}{\mathbf{Y}}
\newcommand{\tS}{\widetilde{S}}
\newcommand{\tB}{\widetilde{B}}
\title{Random walks veering left}
\author{Raoul \textsc{Normand} \& B\'alint \textsc{Vir\'ag}}
\email{rnormand@math.utoronto.ca \& balint@math.toronto.edu}
\begin{document}

\maketitle

\begin{abstract}
We study coupled random walks in the plane such that, at each step, the walks change direction by a uniform random angle plus an extra deterministic angle $\th$. We compute the Hausdorff dimension of the $\th$ for which the walk has an unusual behavior. This model is related to a study of the spectral measure of some random matrices.
\end{abstract}

\begin{scriptsize}
\noindent \textit{MSC 2010}: 60G50, 60B20

\noindent \textit{Keywords}: Random walk, Hausdorff dimension, coupling, random matrix
\end{scriptsize}

\section{Introduction}

\subsection{Model}

The goal of this paper is to study random walks in the complex plane. The simplest one is constructed by turning at each step by a uniform angle, and taking a step of length 1. Now, we want to have a whole family of coupled random walks indexed by $\th \in [0,2 \pi)$, and to this end, we just perform the following simple operation: if, at step $n$, the initial walk turns an angle $\phi_n$, then the one indexed by $\th$ turns an angle $\phi_n + \th$, see Figure \ref{fig:walk}. This does not require any additional randomness but, as we shall see, these walks can have quite different behavior.

\begin{figure}[hbt]
\centering
\includegraphics[width=0.7\columnwidth]{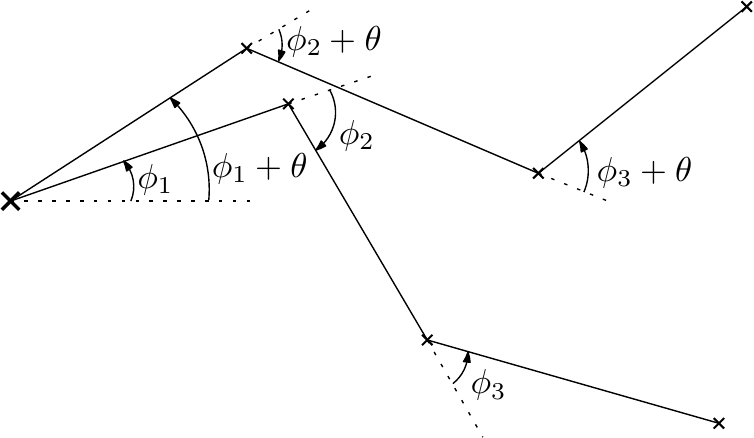}
\caption{Three first steps of two coupled random walks}
\label{fig:walk}
\end{figure}

Intuitively, for two close $\th$, the corresponding walks will remain close for a long time, then spread apart and have quite independent behaviors. A natural guess and, as we shall see, a true one in some sense, is that this happens at a time of order roughly $1/\th$.

All the walks have the same law, and thus, after a time $n$, they are at a distance of order $\sqrt{n}$ from the origin. Now, informally, amongst the $n$ roughly independent walks with $\th = 0, 2 \pi / n,\dots, 2 \pi (n-1)/n$, a small amount may have an exceptional behavior, e.g. be much farther away from the origin. We are interested in the set of these $\th$ for which the walk is exceptionally far. It turns out that the correct threshold is of order $\sqrt{n \ln n}$, and we shall compute the Hausdorff dimension of the set of angles for which the walk is beyond this threshold infinitely many times.

\subsection{Notation and result}

Before giving the motivation for this model, let us introduce some notation and our main result. If $(\th_j)_{j \geq 1}$ is a family of i.i.d. uniform variables in $[0,2 \pi)$, then the model can be written as 
\begin{align*}
S_0(\th) & = 0 \\
S_1(\th) & = e^{i (\th_1 + \th)} = e^{i \th} e^{i \th_1} \\
S_2(\th) & = e^{i (\th_1 + \th)} + e^{i (\th_1 + \th)} e^{i (\th_2 + \th)} = e^{i \th} e^{i \th_1} + e^{2 i \th} e^{i (\th_1 + \th_2)} \\
S_3(\th) & =  e^{i \th} e^{i \th_1} + e^{2 i \th} e^{i (\th_1 + \th_2)} + e^{2 i \th} e^{i (\th_1 + \th_2)} e^{i (\th_3 + \th)} = e^{i \th} e^{i \th_1} + e^{2 i \th} e^{i (\th_1 + \th_2)} + e^{3 i \th} e^{i (\th_1 + \th_2 + \th_3)}
\end{align*}
and so on. But clearly, the variables $e^{i (\th_1 + \dots + \th_j)}$ for $j \geq 1$ are all uniform rotations and are independent, so we might actually replace $\th_1 + \dots + \th_j$ by $\th_j$. More generally, we can choose the length of each step to be random, independent from the direction, while still keeping the latter uniform. The only assumption that we will need on the modulus of these variables is that it has an exponential moment. Hence, let us forget about $(\th_j)$, and fix once and for all the following notation. 

\begin{itemize}
\item Let $(U_j)_{j \geq 1}$ be an i.i.d. sequence of complex random variables with a rotationally symmetric law, and with an exponential moment, i.e. such that
\[
\E ( \exp \k |U_1| ) < \pinf
\]
for some $\k > 0$. Write $\s^2 := \E(\Re(U_1)^2) > 0$.
\item Denote $\cN_{\C}(0, \rho^2)$ the distribution of a complex Gaussian variable with covariance matrix $\rho I_2$. 
\item Let $(G_j)_{j \geq 1}$ an i.i.d. sequence of standard complex Gaussian variables, i.e. with law $\cN_{\C}(0,1)$.
\item For $\th \in [0,1)$, define
\[
S_n(\th) = U_1 e^{2 i \pi \th} + \dots + U_n e^{2 i n \pi \th}, \quad S_n = S_n(0), \quad \tS_n(\th) = \frac{S_n}{\s \sqrt{2 n}}
\]
and
\[
B_n(\th) = G_1 e^{2 i \pi \th} + \dots + G_n e^{2 i n \pi \th}, \quad B_n = B_n(0), \quad \tB_n(\th) = \frac{B_n}{\sqrt{2 n}}.
\]
\item Fix a constant $\a > 0$, and a threshold $\phi(n) = \sqrt{2 \a n \log n}$.
\end{itemize}

We are interested in the Hausdorff dimension of the set of exceptional angles
\[
\cD_{\a} = \{\th \in [0,1), \, |S_n(\th)| > \s \phi(n) \; \text{i.o.} \}.
\]
We call this angles ``exceptional'' because, by the Central Limit Theorem, $|S_n|$ should be of order $\sqrt{n}$, so that, for every $\th \in [0,1)$, almost surely $\th \notin \cD_{\a}$. However we expect that a.s., $\cD_{\a}$ is not empty, and even more, that its Hausdorff dimension is nontrivial. This relies on the following computation: loosely, $S_n(\th)/(\s \sqrt{n})$ should be close to a standard complex Gaussian variable, and thus
\[
\P \left ( |S_n(\th)| > \s \phi(n) \right ) \approx \P \left ( |G_1| > \sqrt{2 \a \log n} \right ) = \frac{1}{n^{\a}}.
\]
The last equality stems from the fact that $|G_1|^2$ has a $\chi^2$ distribution with two degrees of freedom, whose c.d.f. is $x \mapsto 1- e^{-x/2}$. This decrease as a power of $n$ is precisely what one would expect to obtain a nontrivial Hausdorff dimension, and explains why we choose such $\phi(n)$. We will prove the following.

\begin{thm} \label{th:dimSn}
Almost surely,
\[
\dim \cD_{\a} = (1 - \a) \vee 0.
\]
\end{thm}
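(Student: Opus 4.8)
The plan is to prove the two bounds $\dim\cD_\a\le(1-\a)\vee 0$ and $\dim\cD_\a\ge(1-\a)\vee 0$ separately, the first by a first-moment/covering argument and the second by the standard machinery of constructing a random Cantor-like subset of $\cD_\a$ carrying a measure of finite energy.

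For the upper bound, I would fix a small mesh and cover $[0,1)$ by intervals $I_k$ of length $1/n$ (this is natural since the walk indexed by $\th$ and the one indexed by $\th'$ stay close for time $\ll 1/|\th-\th'|$, so on an interval of length $1/n$ the event $\{|S_n(\th)|>\s\phi(n)\}$ is essentially determined by its value at the center). Thus for each $n$, $\cD_\a\cap\{|S_n(\cdot)|>\s\phi(n)\}$ is covered by roughly $n\cdot\P(|S_n(\th_0)|>\s\phi(n))\approx n\cdot n^{-\a}=n^{1-\a}$ intervals of length $1/n$, so for any $s>(1-\a)\vee 0$ the sum $\sum_n n^{1-\a}(1/n)^s\to 0$ along $n=2^j$, giving $\cH^s(\cD_\a)=0$. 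The two technical inputs here are: (i) a Gaussian-type upper tail bound $\P(|S_n|>\s\phi(n))\le C n^{-\a}$, which follows from the exponential-moment hypothesis via a Berry--Esseen or direct Laplace-transform estimate on the CLT; and (ii) a modulus-of-continuity estimate controlling $\sup_{\th\in I_k}|S_n(\th)-S_n(\th_k)|$ so that the event on a whole interval is comparable to the event at one point — this uses $|e^{2\pi ij\th}-e^{2\pi ij\th_k}|\le 2\pi j|\th-\th_k|\le 2\pi j/n$ together with the boundedness-in-probability of the weighted sums.

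For the lower bound (assuming $\a<1$, since otherwise there is nothing to prove), I would build a random subset of $\cD_\a$ along a sparse subsequence $n_j$ (say $n_j$ growing fast, e.g. $n_{j+1}=n_j^{K}$) as follows. At generation $j$, keep those dyadic-type subintervals $I$ of length $1/n_j$ on which $|S_{n_j}(\th_I)|>\s\phi(n_j)$ for the midpoint $\th_I$; the events at scale $n_j$ on disjoint intervals of length $\gg 1/n_{j-1}$ are asymptotically independent (different frequency ranges), and each survives with probability $\approx n_j^{-\a}$, so one expects $\approx n_j^{1-\a}/n_{j-1}$ children per surviving parent — a supercritical branching structure for $\a<1$. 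Taking the decreasing intersection $K$ of the surviving intervals, one shows $K\subseteq\cD_\a$ and constructs on $K$ the natural mass-distribution measure $\mu$. A Frostman/energy computation $\iint|\th-\th'|^{-s}\,\mu(d\th)\mu(d\th')<\infty$ for every $s<1-\a$ then yields $\dim\cD_\a\ge 1-\a$ a.s.; the pairwise estimate needed is $\P(\th,\th'\text{ both survive to generation }j)\lesssim (\text{const})\cdot\P(\th\text{ survives})\P(\th'\text{ survives})$ once $|\th-\th'|\gtrsim 1/n_{j}$, again from asymptotic independence across frequency scales, and for closer pairs one uses the trivial bound together with the fact that the "closeness cost" is exactly of the right polynomial order $|\th-\th'|^{\a-1}$.

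The main obstacle, and the part requiring the most care, is the \emph{Gaussian approximation made quantitative and uniform}: I need $\P(|S_n(\th)|>\s\phi(n))$ to be comparable to $n^{-\a}$ (up to slowly varying or constant factors) \emph{simultaneously} with good control of the joint law of $(S_m(\th),S_n(\th'))$ for $m<n$ and of increments in $\th$, so that the independence-across-scales heuristic can be justified rigorously. This is where the coupling with the Gaussian walk $B_n$ and the exponential-moment assumption enter: I would prove a strong-approximation (KMT-type) or characteristic-function estimate showing $S_n/(\s\sqrt{2n})$ and $B_n/\sqrt{2n}$ can be coupled with error $o(1/\sqrt{\log n})$, which is what is needed to transfer the exact Gaussian tail $\P(|\cN_\C(0,1)|>\sqrt{2\a\log n})=n^{-\a}$ to $S_n$ with the correct constant and to handle the second-moment estimates. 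Everything else — the covering argument, the branching/energy bookkeeping, the passage from $\cH^s$ estimates to Hausdorff dimension — is then routine.
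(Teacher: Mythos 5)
Your overall plan mirrors the paper's: a covering argument for the upper bound, a branching/energy (Frostman) argument for the lower bound, and a quantitative Gaussian comparison as the analytic engine. However, there are two genuine gaps in your sketch.

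\textbf{Time variations are not addressed in your upper bound.} You propose to cover $\cD_\a$ by looking at the events $\{|S_n(\cdot)|>\s\phi(n)\}$ only along $n=2^j$. But membership in $\cD_\a$ is determined by $|S_n(\th)|>\s\phi(n)$ for infinitely many $n$, not infinitely many dyadic $n$, and the two are not comparable here: between $2^j$ and $2^{j+1}$ the walk moves by $\Theta(\sqrt{2^j})$, which is of the \emph{same order} as $\phi(2^j)/\sqrt{\log 2^j}$, so a walk that is exceptional at some $r\in(2^j,2^{j+1}]$ need not be (nearly) exceptional at $2^j$. Consequently $\sum_j 2^{j(1-\a-s)}$ bounds the Hausdorff content of the wrong set. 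The paper handles this by sampling at times $q^n$ with $q=1+\eta^2\a/(4(1+\a))$ close to $1$, showing via a time-increment estimate (Gaussian tail for $B_r(\th)-B_{q^n}(\th)$) that $\cD'_\a\subset D'_{(1-\eta)^2\a}\cup\bar{\cD}'_\eta$ with $\dim\bar{\cD}'_\eta\le 1-\a$, then letting $\eta\to 0$. You need some version of this. Relatedly, your angular mesh $1/n$ is also slightly too coarse: $S_n'(\th)$ has typical size $n^{3/2}$, so on an interval of length $1/n$ the walk can vary by $\Theta(\sqrt n)$, and the probability of a variation exceeding $\eta\phi(n)$ decays only like a small fixed power of $n$, not fast enough for a union bound over $n$. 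One needs mesh $\ll n^{-(1+\b)}$ (cf. Lemma \ref{lem:angdev}), recovering the lost factor $n^\b$ by letting $\b\to 0$.

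\textbf{The Gaussian comparison is the wrong tool for the bivariate estimate.} You flag the joint control of $(S_n(\th),S_n(\th'))$ as the main difficulty, which is right, but KMT/strong approximation does not obviously deliver it. KMT couples the partial-sum process $(\sum_{j\le m}U_je^{2\pi ij\th})_m$ with a Brownian motion \emph{for each fixed} $\th$, and nothing ties together the Brownian motions obtained for two different angles. What is actually needed is a quantitative version of the approximate decorrelation $\P(|S_n|>\s\phi(n),\,|S_n(\th)|>\s\phi(n))=n^{-2\a}(1+O(\log n/(n\th)))$, which controls the covariance structure jointly in $\th$. The paper obtains this (Lemmas \ref{lem:moddevSn1} and \ref{lem:moddevSn2}) by a Lindeberg replacement argument (Chatterjee's invariance principle, Lemma \ref{lem:invprinciple}) applied to the vector map $f(z_1,\dots,z_n)=\phi(n)^{-1}(\sum z_j,\,\sum z_je^{2\pi ij\th})$, combined with Bernstein's inequality after truncation at level $\log^2 n$; the rotational symmetry of the $U_j$'s makes the third-moment cancellation automatic. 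If you want to keep the coupling language, you would need a genuinely bivariate coupling of $(S_n,S_n(\th))$ with $(\s B_n,\s B_n(\th))$ with the correct cross-covariance, which is not a standard KMT statement and would require its own proof.

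Your lower-bound outline (sparse subsequence, supercritical branching for $\a<1$, mass distribution, second-moment/energy computation, near-independence for $|\th-\th'|\gtrsim 1/n_j$) is the same idea the paper realizes through the tree/limsup-fractal machinery of M\"orters--Peres (Lemma \ref{lem:lowbound}), and, once the moderate-deviation estimates are in hand, should go through with the usual care.
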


The incentive for distinguishing the Gaussian case is that we shall first prove the result for a Gaussian random walk, since, as usual, computations are easier in this case. Define its set of exceptional angles by
\[
\cD'_{\a} = \{ \th \in [0,1), \; |B_n(\th)| > \phi(n) \; \text{i.o.} \}.
\]
The corresponding result is that, almost surely,
\begin{equation} \label{eq:dimBn}
\dim \cD'_{\a} = (1 - \a) \vee 0.
\end{equation}

This question is related to other results about dynamical random walks, where the steps are refreshed independently after an exponential time. For instance, \cite{BenjaminiRW} studies properties of random walks which are invariant or not under this change, as well as Hausdorff dimensions of exceptional times. The somehow surprising difference here is that we can obtain nontrivial dimensions without needing extra randomness.

Let us add a couple words concerning the notation used. In the whole text, for $q > 1$, we will write $q^n$ where we mean $\lfloor q^n \rfloor$; the reader could readily fill in the occasional gaps and convince themselves of the innocuousness of such treatment. We shall also always keep in the subtext that the big $O$ notation is uniform, in that the constant hidden inside is the same for every $n$ and all $\th$ in the considered interval (see in particular Lemma \ref{lem:moddev}). Finally, it shall be more convenient to write $u_n \ll v_n$ for $u_n = o(v_n)$.

\subsection{Motivation from Random Matrix Theory}

Let us explain the incentive for this model. Our goal is to study the spectral measure $\mu$ of the Circular Unitary Ensemble (CUE) of order $n$. The monic orthogonal polynomials $\phi_0,\dots,\phi_{n-1}$ for this measure obey the recurrence relation
\begin{equation} \label{eq:szegorec}
\phi_{k+1}(z) = z \phi_k(z) - \bar{\b_k} z^k \bar{\phi_k}(z),
\end{equation}
see \cite{SimonOPUCOOF} or \cite{Simon1}. In \cite{KillipNenciu}, Killip and Nenciu gave a matrix model for the CUE, and they showed that the coefficients $(\b_k)_{k=0,\dots,n-2}$ are independent and circularly symmetric.

Writing $X_n(z) = z^{-n} \phi_n(z)$, Equation \eqref{eq:szegorec} readily implies that
\[
\frac{X_{n+1}(z) - X_n(z)}{X_n(z)} = - \bar{\b_n} z^{n-1} \frac{\bar{X_n(z)}}{X_n(z)}.
\]
The last term $\bar{X_n(z)}/X_n(z)$ gives rise to the greatest difficulties in studying this problem, so our first step is to merely ignore it. Secondly, we may consider i.i.d. circularly symmetric coefficients $(\b_k)_{k \geq 0}$. We are thus led to the recurrence
\[
\frac{X_{n+1}(z) - X_n(z)}{X_n(z)} = - \bar{\b_n} z^{n-1}.
\]
This essentially means
\[
\ln X_{n+1}(z) - \ln X_n(z) = - \bar{\b_n} z^{n-1}
\]
what is precisely the problem of coupled random walks that we study.

Finally, we want to study fine properties of the spectral measure. According to some seminal results of Jitormiskaya and Last (see \cite{JL}, and \cite{Simon2} in the OPUC case), the asymptotic behavior of $(\phi_n)$ is related to the local Hausdorff dimension of the spectral measure. Studying exceptional behavior for $(\phi_n)$ thus allows to study exceptional points for this measure.

Nonetheless, despite these strong reductions, the problem does not turn out to be trivial, as we shall see.

\section{Techniques}

\subsection{Outline of the article}

We shall first prove the result for a Gaussian random walk, then for the general rotationally symmetric one. In both cases, the proof consists mainly of three steps.

The first one is to give precise first- and second-order estimates for moderate deviations, namely for the probabilities
\[
\P ( |S_n(\th)| > \phi(n) ), \quad \P ( |S_n(\th)| > \phi(n), |S_n(\th')|> \phi(n) ).
\]

We then construct the infinite complete binary tree, and circle some vertices as follows. Fix $q > 1$, and consider the $i$-th vertex at level $n$. Compute $S_{q^n}(i 2^{-n})$, remembering that we shall always write $q^n$ instead of $\lfloor q^n \rfloor$. Then we circle the vertex if
\[
|S_{q^n}(i 2^{-n})| > \phi(q^n).
\]
The estimates from above allow to compute the Hausdorff dimension of the set of rays (i.e. paths from the root to infinity) containing infinitely many circled vertices.

This is close to what we wish to compute, but the tree construction only shows a partial image of the process, since it is sampled at specific times and angles. The last step is thus to fill in the gaps. This is the reason for sampling at time $q^n$: taking $q$ close to 1 allows to control the time variations, whereas $q > 2$ allows to control the angular variations.

\subsection{Chatterjee's invariance principle}

The main simplification for Gaussian random walks obviously relies on the fact that for each $n$, $B_n(\th)$ is a Gaussian random variable, and even more, that $(B_n(\th),B_n(\th'))$ is a complex Gaussian vector, which allows to compute its density function and thus the moderate deviations probabilities easily. This is the main difficulty for general variables, as should be clear from the quite tedious computations in Section \ref{sec:moddev2}. To perform this computation, we compare the variables to Gaussian variables, the main tool being Chatterjee's invariance principle, as introduced in \cite{Chatterjee}.

A multivariate version is given in \cite{SenVirag}, and we will now give another version taking into account complex variables, and, more importantly, sharper, since it goes one order further in the Taylor expansion. In the following statement, we take $\bfX = (X_1,\dots,X_n)$ and $\bfY = (Y_1,\dots,Y_n)$ two vectors of independent complex random variables, each with four moments. We assume that for each $i \in \{1,\dots,n\}$, 
\[
\E(\Re(X_i)^k \Im(X_i)^l) = \E(\Re(Y_i)^k \Im(Y_i)^l)
\]
for every $1 \leq k + l \leq 3$. As expected, we wish $\bfY$ to be Gaussian, so we may tune $\bfX$ so that the two first moments match, but matching the third ones can clearly be done only in some specific cases. In particular, if, as in our case, $X_i$ has a rotationally invariant distribution, then the third moments are zero and this result can be applied.

We shall call a function from $\C^n$ to $X$, with $X = \R^m$ or $\C^m$, $m \geq 1$, $k$ times continuously differentiable if it is $k$ times continuously differentiable as a mapping from $\R^{2 n}$ to $X$. This is an important and necessary weakening of the natural assumption of holomorphy, since the functions we want to consider are typically plateau functions, which are zero for $|z| \leq 1$ and 1 for $|z| \geq 1 + \eps$, which can clearly be made $C^{\infty}$ but not holomorphic.

If $H \, : \, \C^n \to \R$ is a $r$ times differentiable mapping, we may write $H(z_1,\dots,z_n) = H(x_1,y_1,\dots,x_n,y_n)$ and define the partial derivatives $\partial^{k+l} H / \partial x_i^k \partial y_j^l$ for $k + l \leq r$. For $u \in \C^n$ and $z = x + i y \in \C$, we let
\[
D_j^r(H)(u).z = \sum_{k=0}^r \binom{r}{k} x^k y^{r-k} \dn{r}{H}{x_j^k y_j^{r-k}}(u).
\]

Let us now fix $f \, : \, \C^n \to \C^m$, $m \geq 1$, four times continuously differentiable, and $U=f(\bfX)$, $V=f(\bfY)$.

\begin{lemma} \label{lem:invprinciple}
For any $g \, : \, \C^m \to \R$ four times continuously differentiable,
\[
\left | \E(g(U)) - \E(g(V)) \right | \leq \sum_{j=1}^n \E(R_j) + \sum_{j=1}^n \E(T_j)
\]
where
\[
R_j = \frac{1}{24} \sup_{z \in [0,X_j]} \left | D_j^4(g \circ f)(X_1,\dots,X_{j-1},z,Y_{j+1},\dots,Y_n).X_j \right |
\]
and
\[
T_j = \frac{1}{24} \sup_{z \in [0,Y_j]} \left | D_j^4(g \circ f)(X_1,\dots,X_{j-1},z,Y_{j+1},\dots,Y_n).Y_j \right |.
\]
\end{lemma}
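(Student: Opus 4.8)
The plan is to prove this by the standard Lindeberg--Chatterjee hybrid argument, replacing the coordinates of $\bfX$ by those of $\bfY$ one at a time. For $j = 0, \dots, n$ set
\[
W_j = f(X_1, \dots, X_j, Y_{j+1}, \dots, Y_n),
\]
so that $U = f(W_n)$-ish, more precisely $g(U) = g(f(\bfX)) = g(W_n)$ after the obvious relabelling, and $g(V) = g(W_0)$; wait, let me orient it so $W_0 = f(\bfY) $ gives $V$ and $W_n = f(\bfX)$ gives $U$. Then
\[
\E(g(U)) - \E(g(V)) = \sum_{j=1}^n \bigl( \E(g(f(X_1,\dots,X_j,Y_{j+1},\dots,Y_n))) - \E(g(f(X_1,\dots,X_{j-1},Y_j,\dots,Y_n))) \bigr),
\]
and it suffices to bound each summand by $\E(R_j) + \E(T_j)$. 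Fix $j$, write $H = g \circ f$, and condition on all variables except the $j$-th slot; the difference becomes $\E(H(\dots,X_j,\dots)) - \E(H(\dots,Y_j,\dots))$ with the other arguments frozen and $X_j, Y_j$ independent of everything (and of each other).

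The core step is a third-order Taylor expansion, with Lagrange remainder, of the real-valued function $H$ restricted to the real plane corresponding to the $j$-th complex coordinate. Treating $H$ as a function of $(x_j, y_j) \in \R^2$ and expanding around the point where this coordinate is $0$, Taylor's theorem with the integral/Lagrange remainder gives
\[
H(\dots, z, \dots) = \sum_{r=0}^3 \frac{1}{r!} D_j^r(H)(\dots,0,\dots).z \; + \; \frac{1}{24} D_j^4(H)(\dots,\xi,\dots).z
\]
for some $\xi$ on the segment $[0,z]$, where the notation $D_j^r(H)(u).z$ is exactly the directional-derivative bracket defined just before the lemma (it is the $r$-th term of the bivariate Taylor expansion, $\sum_k \binom{r}{k} x^k y^{r-k} \partial^r H / \partial x_j^k \partial y_j^{r-k}$). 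Apply this with $z = X_j$ and with $z = Y_j$, take expectations over $X_j$ and $Y_j$ respectively (still conditionally on the frozen coordinates). The polynomial parts involve only the mixed moments $\E(\Re^k \Im^l)$ of $X_j$ and of $Y_j$ for $k+l \le 3$, which are equal by hypothesis; hence all the degree-$\le 3$ contributions cancel exactly. What remains is the difference of the two fourth-order remainders, which is bounded in absolute value by $\frac{1}{24}\sup_{z\in[0,X_j]}|D_j^4(H)(\dots,z,\dots).X_j| + \frac{1}{24}\sup_{z\in[0,Y_j]}|D_j^4(H)(\dots,z,\dots).Y_j|$, i.e.\ by $R_j + T_j$ conditionally. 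Taking the outer expectation and summing over $j$ gives the claim; the four-moment assumption on $X_i, Y_i$ together with $C^4$-ness of $f$ and $g$ ensures every quantity written down is finite, so the swaps of expectation and the Taylor expansions are legitimate.

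The main obstacle, and the only genuinely non-routine point, is bookkeeping the multivariate Taylor expansion correctly: one must expand $H$ in the two real variables $x_j, y_j$ simultaneously (not in the single complex variable, since $H$ is only assumed real-differentiable, not holomorphic), check that the $r$-th Taylor term collapses to precisely the $D_j^r(H)(u).z$ bracket as defined, and verify that the hypothesis ``$\E(\Re(X_i)^k\Im(X_i)^l) = \E(\Re(Y_i)^k\Im(Y_i)^l)$ for $1 \le k+l \le 3$'' (plus the trivial $k=l=0$ case) kills all terms of order $0,1,2,3$. One small subtlety is that the frozen coordinates $(X_1,\dots,X_{j-1},Y_{j+1},\dots,Y_n)$ are independent of the pair $(X_j,Y_j)$, so the conditional expectation genuinely factorizes and the moment-matching can be applied inside; this independence is exactly why the one-at-a-time swap works. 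Everything else is the textbook Lindeberg telescoping, the only enhancement over the version in \cite{SenVirag} being that we carry the expansion to fourth rather than third order, which is what produces the $X_j^4$/$Y_j^4$-type remainders $R_j, T_j$ and the gain of one power.
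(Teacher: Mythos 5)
Your proof is correct and follows precisely the approach the paper outlines (and relegates to \cite{Chatterjee} and \cite{SenVirag}): the Lindeberg one-coordinate-at-a-time swap, a bivariate Taylor expansion in $(x_j,y_j)$ with Lagrange remainder around $0$ in the $j$-th slot, moment matching up to order three to annihilate the polynomial part after conditioning, and the fourth-order remainders giving exactly $R_j + T_j$. The only cosmetic remark is that the phrase ``four-moment assumption'' is a slight misnomer — the hypothesis matches moments only up to order three, with fourth moments merely assumed finite so the remainder bound is well defined — but this does not affect the argument.
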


The proof is just a perusal of the arguments of \cite{Chatterjee} or \cite{SenVirag}, using the multidimensional Taylor formula, and noting that the assumptions on the moments of the $X_i$'s and the $Y_i$'s are precisely what is needed to cancel out the first, second and third order terms in the Taylor expansion.

\subsection{Tree dimension} \label{sec:treedim}

\subsubsection{Setting}

Let us explain more precisely how we will define our tree and compute Hausdorff dimensions. To begin with, construct a complete infinite binary tree $\cT$, calling $v_i^n$, $i \in \{ 0, \dots, 2^n - 1\}$ its vertices at level $n$. We shall write $|v|$ for the level of $v$ (where the root has level 0), and $u \prec v$ is $u$ is an ancestor of $v$.

Each angle $\th \in [0,1)$ corresponds to a ray (i.e. a path from the root to infinity) $\cR(\th)$ in this tree, by saying that if $\th$ has a proper binary expansion $\th = 0,b_1 b_2 \dots$, then $\cR(\th)$ is the path in the tree starting from the origin and going left (resp. right) at level $i \geq 0$ if $b_{i+1} = 0$ (resp 1). Clearly, $\cR$ is not onto all rays.

Let us reformulate this. Define
\[
A_i^n = [i 2^{-n},(i+1)2^{-n}), \quad i \in \{0, \dots, 2^n - 1 \}
\]
and let $i^n(\th)$ be the $i$ such that $\th \in A_i^n$. Then
\[
\cR(\th) = \{ v^n_{i^n(\th)}, \, n \geq 0 \}.
\]

Let us suppose that we are given a collection of random variables $(Z_v, v \in \cT)$, indexed by the tree, with values in $[0,1]$. One may think that we circle $v$ when $Z_v > 0$. Our interest is the limsup fractal associated to $(Z_v)$, in the terminology of \cite{MP}, defined as the set of angles with infinitely many circled vertices on their path, to wit
\begin{align*}
D & = \{ \th \in [0,1), \, \# \{ v \in \cR(\th), Z_v > 0 \} = \pinf \} \\
& = \{ \th \in [0,1), \, Z_{v^n_{i^n(\th)}} > 0 \; \text{i.o.} \} \\
& = \bigcap_{N \in \N} \bigcup_{n \geq N} \bigcup_{i = 0}^{2^n - 1} A_i^n \unn{Z_{v_i^n} > 0},
\end{align*}
where, for a set $X$, $X.1 = X$ and $X.0 = \emp$. We will finally assume that the law of $Z_v$ is the same for the vertices at the same level, and let
\[
p_n = \P(Z_v > 0), \quad m_n = \E(Z_v), \quad |v| = n.
\]
Note that $p_n \geq m_n \geq \P(Z_v = 1)$. We shall now give two results concerning upper and lower bounds on the Hausdorff dimension of $D$.

\subsubsection{Upper bound}

\begin{lemma} \label{lem:upbound}
Assume that $p_n = O (2^{-n \b})$. Then
\[
\dim D \leq 1 - \b
\]
almost surely.
\end{lemma}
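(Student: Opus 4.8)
The plan is to use a standard first-moment (union bound) argument applied to a well-chosen cover of $D$ by dyadic intervals. Recall that $D = \bigcap_{N} \bigcup_{n \geq N} \bigcup_{i=0}^{2^n-1} A_i^n \unn{Z_{v_i^n} > 0}$, so for every $N$, the collection $\{ A_i^n : n \geq N, \ 0 \leq i \leq 2^n - 1, \ Z_{v_i^n} > 0 \}$ is a cover of $D$ by sets of diameter $2^{-n} \leq 2^{-N}$. To bound the $s$-dimensional Hausdorff content of this cover, I would compute the expectation of $\sum_{n \geq N} \sum_{i} (2^{-n})^s \unn{Z_{v_i^n} > 0}$. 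Its expectation is $\sum_{n \geq N} 2^n \cdot 2^{-ns} \cdot p_n$.

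Using the hypothesis $p_n = O(2^{-n\b})$, this expectation is $O\left( \sum_{n \geq N} 2^{n(1-s-\b)} \right)$. Now fix any $s > 1 - \b$, so that $1 - s - \b < 0$ and the geometric series converges; in fact the tail sum over $n \geq N$ is $O(2^{N(1-s-\b)})$, which tends to $0$ as $N \to \infty$. By Markov's inequality (or simply Fatou/Borel--Cantelli reasoning), this forces the random quantity $\inf_{N} \sum_{n \geq N} \sum_i (2^{-n})^s \unn{Z_{v_i^n} > 0}$ to be $0$ almost surely: indeed its expectation is bounded by $O(2^{N(1-s-\b)})$ for every $N$, hence is $0$. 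Therefore the $s$-dimensional Hausdorff measure $\mathcal{H}^s(D)$ is almost surely $0$, which gives $\dim D \leq s$ almost surely.

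To conclude, I would take a countable sequence $s_k \downarrow 1 - \b$ (say $s_k = 1 - \b + 1/k$), apply the above for each $k$ on an event of full probability, and intersect these countably many full-probability events. On the resulting full-probability event, $\dim D \leq s_k$ for all $k$, hence $\dim D \leq 1 - \b$. A small caveat to handle: if $\b \geq 1$ then $1 - \b \leq 0$ and one should interpret the statement as $\dim D \leq (1-\b) \vee 0 = 0$; the same computation works verbatim taking any $s > 0$, since then $1 - s - \b < 0$ still holds.

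I do not expect any serious obstacle here: this is the routine ``easy direction'' of a limsup-fractal dimension computation, and the entire content is the union bound combined with the summability of $\sum_n 2^{n(1-s-\b)}$ for $s > 1-\b$. The only points requiring a little care are the passage from ``expectation zero'' to ``almost surely zero'' (immediate, since the quantity is nonnegative) and the exchange of the almost-sure statement with the choice of $s$, handled by the countable intersection over $s_k \downarrow 1-\b$ as above. No use of the tree structure beyond the indexing is needed, and the independence or dependence structure of the $Z_v$ is irrelevant for the upper bound.
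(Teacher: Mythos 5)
The proposal is correct and follows essentially the same first-moment/union-bound argument as the paper: cover $D$ by the dyadic intervals $A_i^n$ with $Z_{v_i^n}>0$ for $n\geq N$, bound the expected $s$-Hausdorff content by $\sum_{n\geq N}2^{n(1-s)}p_n$, note this vanishes as $N\to\infty$ when $s>1-\b$, and take a countable sequence $s_k\downarrow 1-\b$. The only cosmetic difference is that you spell out the Markov and countable-intersection steps that the paper leaves implicit.
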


\begin{proof}
This is a well-known result in various guises, see e.g. \cite{MP}. It suffices to notice that for each $N \in \N$,
\[
D \subset \bigcup_{n \geq N} \bigcup_{i = 0}^{2^n - 1} A_i^n \unn{Z_{v_i^n} > 0}.
\]
Hence, the $\g$-Hausdorff content $H^{\g}(D)$ of $D$ verifies
\[
H^{\g}(D) \leq \sum_{n \geq N} \sum_{i=0}^{2^n - 1} |A_i^n|^{\g} \unn{Z_{v_i^n} > 0} = \sum_{n \geq N} \sum_{i=0}^{2^n - 1} 2^{-n \g} \unn{Z_{v_i^n} > 0},
\]
and thus
\[
\E(H^{\g}(D)) \leq \sum_{n \geq N} \sum_{i=0}^{2^n - 1} 2^{-n \g} \P(Z_{v_i^n} > 0) = \sum_{n \geq N} 2^n 2^{-n \g} p_n.
\]
By assumption on $p_n$, this sum is finite whenever $\g > 1 - \b$, and in this case, having $N$ tend to $\pinf$ shows that $\E(H^{\g}(D)) = 0$, so that $H^{\g}(D) = 0$ a.s. Thus
\[
\dim D \leq \g
\]
almost surely. Since this holds for every $\g > 1 - \b$, the result follows.
\end{proof}

\subsubsection{Lower bound}

For the lower bound, we shall use a version of Theorem 10.6 in \cite{MP}, reformulated in our context. For $n \geq 0$ and $|u| \leq n$, define
\[
M_n(u) = \sum_{v \succ u, \, |v| = n } Z_v.
\]

\begin{lemma} \label{lem:lowbound}
Assume that there exist $\zeta(n) \geq 1$ and $0 < \g < 1$ such that
\begin{itemize}
	\item for every $m \leq n$ and $|u| = m$, $\Var(M_n(u)) \leq \zeta(n) \E(M_n(u)) = \zeta(n) m_n 2^{n-m}$,
	\item $2^{n (\g - 1)} \zeta(n) m_n^{-1} \to 0$ as $n \to \pinf$.
\end{itemize}
Then $\dim D \geq \g$ almost surely.
\end{lemma}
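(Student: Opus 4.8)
The plan is to run the classical second-moment / energy method on a random measure supported on $D$, in the spirit of Theorem~10.6 of \cite{MP}. First I would build, for each level $n$, the random measure $\nu_n$ on $[0,1)$ that puts mass $Z_{v_i^n}/(m_n 2^n)$ uniformly on each interval $A_i^n$, i.e.
\[
\nu_n(dx) = \frac{1}{m_n 2^n} \sum_{i=0}^{2^n - 1} Z_{v_i^n} \, \frac{\unn{x \in A_i^n}}{|A_i^n|} \, dx .
\]
By construction $\E(\nu_n([0,1))) = 1$, and the first hypothesis (the variance bound with $u$ the root, $m=0$) gives $\Var(\nu_n([0,1))) \le \zeta(n) m_n^{-1} 2^{-n} \to 0$, so $\nu_n([0,1)) \to 1$ in $L^2$ and in particular the total mass stays bounded away from $0$ with good probability along a subsequence.

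The key estimate is the bound on the expected $\g$-energy
\[
I_\g(\nu_n) = \iint \frac{\nu_n(dx)\,\nu_n(dy)}{|x-y|^\g}.
\]
Splitting the double sum over pairs $(i,j)$ according to the level $m = m(i,j)$ at which the two intervals $A_i^n, A_j^n$ first separate in the tree (so $|x-y| \asymp 2^{-m}$ when $x \in A_i^n$, $y \in A_j^n$ with separation level $m$), one gets
\[
\E(I_\g(\nu_n)) \lesssim \frac{1}{(m_n 2^n)^2} \sum_{m=0}^{n} 2^{m\g} \sum_{|u|=m} \E\!\big( M_n(u_L) M_n(u_R) \big),
\]
where $u_L, u_R$ are the two children of $u$. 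Since $M_n(u) = M_n(u_L) + M_n(u_R)$ and the $Z_v$ are nonnegative, $M_n(u_L)M_n(u_R) \le \tfrac14 M_n(u)^2$, hence $\E(M_n(u_L)M_n(u_R)) \le \Var(M_n(u)) + \E(M_n(u))^2 \le \zeta(n) m_n 2^{n-m} + (m_n 2^{n-m})^2$. Plugging this in and using $\sum_{|u|=m} 1 = 2^m$,
\[
\E(I_\g(\nu_n)) \lesssim \frac{1}{(m_n 2^n)^2} \sum_{m=0}^n 2^{m\g} \Big( 2^m \zeta(n) m_n 2^{n-m} + 2^m m_n^2 2^{2(n-m)} \Big)
= \zeta(n) m_n^{-1} 2^{-n} \sum_{m=0}^n 2^{m\g} + \sum_{m=0}^n 2^{m(\g-1)} .
\]
The second sum is $O(1)$ since $\g < 1$; the first is $\lesssim \zeta(n) m_n^{-1} 2^{-n} \cdot 2^{n\g} = 2^{n(\g-1)} \zeta(n) m_n^{-1}$, which $\to 0$ by the second hypothesis. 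So $\E(I_\g(\nu_n))$ is bounded uniformly in $n$.

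Finally I would extract a weak-* subsequential limit $\nu$ of the $\nu_n$: along a suitable subsequence $\nu_n([0,1))$ is bounded below and $I_\g(\nu_n)$ is bounded above (both with positive probability, using Markov and the $L^2$ convergence of the mass), and Fatou/lower semicontinuity of energy under weak convergence give a limit measure $\nu$ with $\nu([0,1)) > 0$ and $I_\g(\nu) < \infty$. One checks that $\nu$ is supported on $D$: any $x$ in the (weak-*) support must lie in intervals $A_{i^n(x)}^n$ carrying positive $\nu_n$-mass for infinitely many $n$, forcing $Z_{v^n_{i^n(x)}} > 0$ infinitely often, i.e. $x \in D$. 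By the energy version of Frostman's lemma, a set carrying a positive measure of finite $\g$-energy has Hausdorff dimension at least $\g$, so $\dim D \ge \g$ with positive probability; a standard zero-one argument (the event $\{\dim D \ge \g\}$ is a tail event with respect to the tree levels, or one applies the Kolmogorov $0$–$1$ law after noting the construction only depends on $(Z_v)$ at large levels) upgrades this to almost surely.

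The main obstacle is the energy estimate: getting the pair sum over $(i,j)$ organized cleanly by separation level, and checking that the ``diagonal'' contribution from pairs in the same deep subtree (the $2^{m(\g-1)}$ terms, and the endpoint $m=n$ self-pairs where $|x-y|^{-\g}$ is integrated within a single interval $A_i^n$) is genuinely $O(1)$ and not hiding a divergence — this is where $\g < 1$ is used in an essential way. Everything else (the $L^2$ mass bound, weak-* compactness, lower semicontinuity of energy, Frostman, the $0$–$1$ law) is standard.
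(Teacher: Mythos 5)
Your route is genuinely different from the paper's. The paper follows the M\"orters--Peres Theorem 10.6 cascade: it chooses a sparse random sequence of levels $(\ell_k)$ with $M_{\ell_k}(u)>0$ for all $|u|=\ell_{k-1}$, and builds a single probability measure by recursively distributing mass proportional to the $M_{\ell_k}$'s, then checks finite $\gamma$-energy and support on $D$. You instead build a flat random measure $\nu_n$ at every level $n$, control its total mass in $L^2$ and its expected $\gamma$-energy uniformly, and extract a weak-* subsequential limit. Both are standard second-moment constructions; yours avoids the sparse-level bookkeeping at the price of a weak-* limit argument at the end.

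There is, however, a genuine flaw in your energy estimate as written. You assert that for $x\in A_i^n$, $y\in A_j^n$ with separation level $m$ one has $|x-y|\asymp 2^{-m}$, and from this deduce $|x-y|^{-\gamma}\lesssim 2^{m\gamma}$. Only the upper bound $|x-y|\le 2^{-m}$ is true; the lower bound fails badly for pairs of sub-intervals adjacent to the level-$(m+1)$ midpoint, where $|x-y|$ can be arbitrarily small even though the separation level is tiny. So $|x-y|^{-\gamma}$ is \emph{not} $O(2^{m\gamma})$ on those pairs, and the displayed bound
$\E(I_\gamma(\nu_n))\lesssim (m_n2^n)^{-2}\sum_m 2^{m\gamma}\sum_{|u|=m}\E(M_n(u_L)M_n(u_R))$
does not follow from what you wrote. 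The estimate is salvageable: use
$|x-y|^{-\gamma}\le C_\gamma\sum_{k\ge 0,\,2^{-k}\ge|x-y|}2^{k\gamma}$
and the fact that $|x-y|\le 2^{-k}$ forces $x,y$ into the same or adjacent level-$k$ dyadic intervals; after an AM--GM on the adjacent pair this yields $I_\gamma(\nu_n)\lesssim\sum_{k\ge0}2^{k\gamma}\sum_{|u|=k}\nu_n(A_u)^2$. Since $\nu_n(A_u)=M_n(u)/(m_n2^n)$ for $|u|\le n$ (and the density bound $1/m_n$ handles $|u|>n$), taking expectations and using $\E(M_n(u)^2)\le\zeta(n)m_n2^{n-m}+(m_n2^{n-m})^2$ gives the same $O(1)$ you claim. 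In short: replace the cross-term $\E(M_n(u_L)M_n(u_R))$ by $\E(M_n(u)^2)$ and justify the $2^{m\gamma}$ factor via the geometric-series/adjacent-interval argument, not via $|x-y|\asymp 2^{-m}$.

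Two smaller concerns. For the support, weak-* convergence only gives $\operatorname{supp}(\nu)\subset\bigcap_J\overline{\bigcup_{j\ge J}K_{n_j}}$ with $K_n$ the union of the positively-weighted level-$n$ intervals; the closure can add dyadic rationals not in $D$, so you should note that finite $\gamma$-energy forces $\nu$ to have no atoms, hence zero mass on that countable set, before concluding $\nu(D^c)=0$. And your appeal to a Kolmogorov $0$--$1$ law is not legitimate at the level of generality of the lemma: no independence among the $(Z_v)$ is assumed, so ``tail with respect to tree levels'' is not an independent tail $\sigma$-field. The paper avoids this by following the M\"orters--Peres construction, which builds the measure from almost-sure events; if you keep the weak-* route you need either to restrict to the application (where $D$ is tail-measurable in the i.i.d.\ increments) or to strengthen the second-moment bounds into an almost-sure statement, e.g.\ via Borel--Cantelli along a sparse subsequence.
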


This is not exactly Theorem 10.6 in \cite{MP}, since in this reference, vertices are either black or white (i.e. $Z_v \in \{0,1\}$), whereas we allow all shades of gray (i.e. $Z_v \in [0,1]$). However, with our definitions, the result still holds. Checking this requires a careful perusal of the arguments of \cite{MP}. We give below some more details which can be skipped in a first reading. 

\begin{proof}
The idea of the proof of Lemma \ref{lem:lowbound} is to construct a probability measure supported by $D$ (in the sense that $\mu(D^c) = 0$), which has finite $\g$-energy. To this end, one can find an increasing sequence $(\ell_n)$ such that $M_{\ell_k}(u) > 0$ for every $|u| = \ell_{k-1}$. One may then define a probability measure consistently on every dyadic interval by
\begin{itemize}
	\item assigning mass $2^{- \ell_0}$ to each interval $[u,u+2^{- \ell_0})$, $|u|=l_0$;
	\item defining recursively, for $|u| = m$, $\ell_{k-1} < m \leq \ell_k$ and $v$ the ancestor of $u$ at level $\ell_{k-1}$,
\[
\mu ( [u, u + 2^{-m}) ) = \frac{M_{\ell_k}(u) \mu([v,v+2^{- \ell_{k-1}}))}{M_{\ell_k}(v)}.
\]
\end{itemize}
The remaining of the proof in \cite{MP} shows that there is a relevant choice of $(\ell_n)$ such that this measure has finite $\g$-energy, and one can check that this requires no modification but writing $(Z_v)^2 \leq Z_v$ instead of $(Z_v)^2 = Z_v$.

The proof is then over once we check that this measure is supported by $D$. To see this, note that if $\theta \in D^c$, then for some $v \in \cR(\th)$ and every $u \in \cR(\th)$ with $u \succ v$, $Z_u = 0$. Then, for $k$ large enough $\ell_{k-1} \geq |v|$, and the very construction of the measure then implies that, for $u \in \cR(\th)$ at level $\ell_k$,
\[
\mu ( [u, u + 2^{-\ell_k}) ) = \frac{M_{\ell_k}(u) \mu([v,v+2^{- \ell_{k-1}}))}{M_{\ell_k}(v)} = \frac{Z_u \mu([v,v+2^{- \ell_{k-1}}))}{M_{\ell_k}(v)} = 0.
\]
Since $\th \in [u,u+2^{-\ell_k})$, this tells that there exists $\eps > 0$ such that $\mu([\th,\th+\eps)) = 0$, so that indeed $\mu(D^c) = 0$.
\end{proof}

\subsection{Bernstein's inequality}

A last tool that we are going to use is the classical Bernstein inequality \cite{BernsteinIneq}, which we recall here for the reader's convenience.

\begin{lemma}
Let $X_1,\dots,X_n$ be independent centered random variables. Assume that there is a $M$ such that $|X_i| \leq M$ for all $i$. Then, for all $t > 0$,
\[
\P \left ( \sum_{i=1}^n X_i > t \right ) \leq \exp \left ( - \frac{t^2/2}{\sum_{i=1}^n \E(X_i^2) + M t / 3}\right ).
\]
\end{lemma}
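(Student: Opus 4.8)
The plan is to combine the exponential Chernoff bound with a Taylor expansion of the moment generating function. First, for any $\lambda > 0$, Markov's inequality applied to the nonnegative variable $e^{\lambda \sum_i X_i}$ together with independence gives
\[
\P\left(\sum_{i=1}^n X_i > t\right) \leq e^{-\lambda t}\, \E\left(e^{\lambda \sum_i X_i}\right) = e^{-\lambda t} \prod_{i=1}^n \E\left(e^{\lambda X_i}\right),
\]
so it suffices to bound each factor $\E(e^{\lambda X_i})$ on a suitable range of $\lambda$ and then to optimize.

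For the second step I would expand the exponential and use that each $X_i$ is centered, so that the $k=1$ term vanishes:
\[
\E\left(e^{\lambda X_i}\right) = 1 + \sum_{k \geq 2} \frac{\lambda^k \E(X_i^k)}{k!}.
\]
Since $|X_i| \leq M$ we have $|\E(X_i^k)| \leq M^{k-2} \E(X_i^2)$ for $k \geq 2$, and the elementary inequality $k! \geq 2 \cdot 3^{k-2}$ reduces the remaining series to a geometric one: for $0 < \lambda < 3/M$,
\[
\E\left(e^{\lambda X_i}\right) \leq 1 + \frac{\lambda^2 \E(X_i^2)}{2}\sum_{k \geq 2}\left(\frac{\lambda M}{3}\right)^{k-2} = 1 + \frac{\lambda^2 \E(X_i^2)/2}{1 - \lambda M/3} \leq \exp\left(\frac{\lambda^2 \E(X_i^2)/2}{1 - \lambda M/3}\right),
\]
the last step being $1 + x \leq e^x$. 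Multiplying over $i$ yields, for every $\lambda \in (0, 3/M)$,
\[
\P\left(\sum_{i=1}^n X_i > t\right) \leq \exp\left(-\lambda t + \frac{\lambda^2/2}{1 - \lambda M/3}\sum_{i=1}^n \E(X_i^2)\right).
\]

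Finally I would substitute the explicit value $\lambda = t\big/\big(\sum_i \E(X_i^2) + Mt/3\big)$; this lies in $(0, 3/M)$ because $\lambda M/3 = (Mt/3)\big/\big(\sum_i \E(X_i^2) + Mt/3\big) < 1$, and with it $1 - \lambda M/3 = \sum_i \E(X_i^2)\big/\big(\sum_i \E(X_i^2) + Mt/3\big)$, so a one-line simplification collapses the exponent to precisely $-\frac{t^2/2}{\sum_{i=1}^n \E(X_i^2) + Mt/3}$, as claimed.

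The argument is entirely routine; the only point requiring care is the bookkeeping of constants — one must use exactly the bound $k! \ge 2\cdot 3^{k-2}$ (a cruder bound would spoil the $Mt/3$ term in the denominator) and then identify the optimal $\lambda$ above. Being a classical inequality, this could equally well simply be quoted from \cite{BernsteinIneq} without proof.
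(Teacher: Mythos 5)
Your proof is correct and is the standard derivation of Bernstein's inequality: Chernoff bound, Taylor expansion of the moment generating function using centering, the bound $k! \geq 2\cdot 3^{k-2}$ to reduce to a geometric series, and the explicit optimizing choice of $\lambda$ (each step checks out, including the final algebraic simplification of the exponent). Note, however, that the paper does not prove this lemma at all: it simply recalls it as the classical Bernstein inequality and cites \cite{BernsteinIneq}, so there is no proof in the text to compare yours against; your closing remark that one could equally well just cite it is in fact exactly what the authors do.
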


\section{Random walk with Gaussian increments}

\subsection{Moderate deviations} \label{sec:moddev}

Computing the Hausdorff dimension relies on first and second moment estimates which we now give.

\begin{lemma} \label{lem:moddev}
The following estimates hold.
\begin{enumerate}
\item For every $\th \in [0,1)$,
\[
\P \left ( |B_n(\th)| > \phi(n) \right ) = \frac{1}{n^{\a}}.
\]
\item Fix a sequence $\log n / n \ll \eps_n \ll 1$. Then,
\[
\P \left ( |B_n| > \phi(n), |B_n(\th)|> \phi(n) \right ) = \frac{1}{n^{2 \a}} \left ( 1 + O \left ( \frac{\log n}{n \th} \right ) \right )
\]
uniformly for $\th \in [\eps_n,1)$, and in particular
\[
\left | \P \left ( |B_n| > \phi(n), |B_n(\th)|> \phi(n) \right ) - \P \left ( |B_n| > \phi(n) \right ) \P \left ( |B_n(\th)|> \phi(n) \right ) \right | = \frac{1}{n^{2 \a}} O \left ( \frac{\log n}{n \th} \right )
\]
still uniformly in $\th \in [\eps_n,1)$.
\item Fix a sequence $\log n / n \ll \eps_n \ll 1$ and a bounded measurable function $g \, : \, \R^+ \to \R^+$, with support in $[1, \pinf)$. Then,
\[
\left | \E \left ( g( | \tB_n | ) g ( | \tB_n(\th) | ) \right ) - \E \left ( g ( | \tB_n | ) \right ) \E \left ( g ( | \tB_n(\th) | ) \right ) \right | = \frac{1}{n^{2 \a}} O \left ( \frac{\log n}{n \th} \right )
\]
uniformly in $\th \in [\eps_n,1)$.
\end{enumerate}
\end{lemma}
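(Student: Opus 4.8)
I would treat the three parts in order; part~(i) is exact, part~(ii) carries the real content, and part~(iii) is a variant of part~(ii).

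\medskip
\noindent\emph{Part (i).} For each fixed $\th$ the variables $G_je^{2ij\pi\th}$ are again i.i.d.\ standard complex Gaussians, so $B_n(\th)$ has the same (proper, complex Gaussian) law as $B_n$, with $\E|B_n(\th)|^2=2n$, and $|B_n(\th)|^2/(2n)$ is a standard exponential variable. Hence $\P(|B_n(\th)|>\phi(n))=e^{-\phi(n)^2/(2n)}=e^{-\a\log n}=n^{-\a}$, an exact identity.

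\medskip
\noindent\emph{Part (ii).} Since $\E(G_jG_k)=0$ for all $j,k$, the pair $(B_n,B_n(\th))$ is a centered \emph{proper} (circularly symmetric) complex Gaussian vector, whose law is therefore pinned down by $\E|B_n|^2=\E|B_n(\th)|^2=2n$ and
\[
\E\bigl(B_n\,\overline{B_n(\th)}\bigr)=2\sum_{j=1}^n e^{-2ij\pi\th}=2n\,\overline{\lambda},\qquad \lambda=\lambda_n(\th):=\frac1n\sum_{j=1}^n e^{2ij\pi\th},
\]
where $|\lambda|=\dfrac{|\sin(n\pi\th)|}{n|\sin(\pi\th)|}\leq\dfrac{1}{2n\th}$ for $\th\in(0,1/2]$ (using $\sin(\pi\th)\geq2\th$ on that range); the case $\th\in(1/2,1)$ reduces to $(0,1/2)$ by the symmetry $\th\mapsto1-\th$, which amounts to conjugating the $G_j$'s and hence leaves the law of $(|B_n|,|B_n(\th)|)$ unchanged, so I assume $\th\leq1/2$. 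The hypothesis $\eps_n\gg\log n/n$ makes $|\lambda|\log n\to0$, and this is the small parameter the estimate is organized around. I would write $B_n(\th)=\lambda B_n+W$ with $W$ a proper complex Gaussian independent of $B_n$ and $\E|W|^2=2n(1-|\lambda|^2)$, and condition on $B_n$:
\[
\P\bigl(|B_n(\th)|>\phi(n)\,\big|\,B_n=z\bigr)=\P\bigl(|W+\lambda z|>\phi(n)\bigr),
\]
which is the tail at $2\phi(n)^2/\E|W|^2$ of a noncentral $\chi^2$ law with two degrees of freedom and noncentrality $2|\lambda z|^2/\E|W|^2$. The key structural point is that this conditional probability depends on $z$ only through $|\lambda|^2|z|^2$, so its expansion in $\lambda$ carries \emph{no first-order term}; keeping the next order, the standard expansion of the noncentral $\chi^2_2$ tail (valid since $|\lambda|\log n\to0$) gives
\[
\P\bigl(|W+\lambda z|>\phi(n)\bigr)=e^{-\phi(n)^2/(2n(1-|\lambda|^2))}\Bigl(1+O\bigl(|\lambda|^2|z|^2/n\bigr)+O\bigl(|\lambda|^2|z|^2\phi(n)^2/n^2\bigr)\Bigr).
\]
Multiplying by the density $(2\pi n)^{-1}e^{-|z|^2/(2n)}$ of $B_n$, integrating over $\{|z|>\phi(n)\}$, rescaling $z=\sqrt{2n}\,w$, and using $\phi(n)^2/(2n)=\a\log n$ together with the elementary integrals $\int_R^\infty e^{-r}r^k\,dr$, the leading term is $n^{-\a}\cdot n^{-\a}$ and the remainder is $O(|\lambda|^2\log^2 n)=O\bigl((\log n/(n\th))^2\bigr)=O(\log n/(n\th))$, the last equality because $\log n/(n\th)\leq\log n/(n\eps_n)\to0$. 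Every $O$ depends on $\th$ only through $|\lambda|\leq1/(2n\th)$, so all are uniform, and the displayed ``in particular'' follows immediately from part~(i).

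\medskip
\noindent\emph{Part (iii).} The quantity to bound is $\int_{\C^2}g(|z_1|)g(|z_2|)\bigl(f_\lambda(z_1,z_2)-f_0(z_1,z_2)\bigr)\,dz_1\,dz_2$, where $f_0=\pi^{-2}e^{-|z_1|^2-|z_2|^2}$ is the product density and $f_\lambda(z_1,z_2)=\pi^{-2}(1-|\lambda|^2)^{-1}\exp\bigl(-(|z_1|^2+|z_2|^2-2\Re(\overline{\lambda}\,\overline{z_1}z_2))/(1-|\lambda|^2)\bigr)$ is the density of the proper complex Gaussian pair $(\tB_n,\tB_n(\th))$ (correlation $\overline{\lambda}$). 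Expanding $f_\lambda-f_0$ in $\lambda$, the term linear in $\lambda$ is proportional to $\Re(\overline{\lambda}\,\overline{z_1}z_2)$ and integrates to $0$ once one integrates over the arguments of $z_1$ and $z_2$ separately (rotational symmetry in each coordinate), exactly as the first-order term cancels in part~(ii); the quadratic remainder is $O(|\lambda|^2)$ against the Gaussian weight times a polynomial in $|z_1|,|z_2|$, and since $g$ is supported in $[1,\pinf)$, i.e.\ in the moderate-deviation regime, the surviving integral has the structure already estimated in part~(ii) and yields $n^{-2\a}O(\log n/(n\th))$, uniformly in $\th\in[\eps_n,1)$.

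\medskip
\noindent\emph{Main obstacle.} The only genuine work sits in the asymptotics of part~(ii): producing the noncentral $\chi^2_2$ tail with an explicit second-order error, carrying out the Gaussian integral over $\{|z|>\phi(n)\}$, and — conceptually — confirming that the contribution linear in $\lambda$ really cancels, so that the error is quadratic in $|\lambda|\leq1/(2n\th)$ rather than merely linear; this is what turns a crude $O(1/(n\th))$ into $O(\log n/(n\th))$. The rest — uniformity in $\th$, the case $\th$ near $1$ via conjugation, and deducing part~(iii) — is routine bookkeeping.
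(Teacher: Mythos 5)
Your argument is correct and reaches the stated bound, but by a route that is genuinely different from (and in fact sharper than) the paper's.

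The paper's proof of part~(ii) writes the joint density of $(B_n,B_n(\th))/\sqrt{2n}$ explicitly and uses the crude elementary bound $|2\Re(D_n(\th) z_1\bar z_2)|\le |D_n(\th)|\,(|z_1|^2+|z_2|^2)$ in the exponent; the resulting double integral then factors and is evaluated exactly, giving upper and lower bounds of the form $\frac{1\pm|D_n|}{1\mp|D_n|}\exp(-\frac{2\a\log n}{1\pm|D_n|})$ whose expansion has error $O(|D_n|\log n)$ --- linear in the correlation. Your proof instead decomposes $B_n(\th)=\lambda B_n+W$ with $W$ independent, conditions on $B_n=z$, and invokes the noncentral $\chi^2_2$ (Marcum-$Q$) expansion. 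Your structural observation --- that by rotational symmetry of $W$ the conditional tail depends on $(\lambda,z)$ only through $|\lambda z|^2$ and on $\Var W=2n(1-|\lambda|^2)$, so the expansion is in $|\lambda|^2$ with no linear term --- yields an error of order $|\lambda|^2\log^2 n=O((\log n/(n\th))^2)$, which is strictly smaller than what the paper obtains. So you prove more than is asked, and then discard the gain via $\log n/(n\th)\to 0$. Part~(iii) you handle by expanding $f_\lambda-f_0$ and killing the linear term by rotational symmetry of $g$ in each coordinate; the paper instead repeats the part~(ii) density bound and integrates exactly. Both approaches are valid; the paper's is computationally lighter, yours reveals a hidden cancellation.

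Two small points worth attending to. First, your noncentral-$\chi^2_2$ expansion is not uniform over the entire integration domain $\{|z|>\phi(n)\}$, since $a^2=|\lambda z|^2/\Var W\to\pinf$ as $|z|\to\pinf$; you would want to split at $|z|^2\asymp n\log n$ and use the rapid Gaussian decay for larger $|z|$ (where the conditional probability is simply bounded by $1$). Second, the parenthetical claim that the cancellation ``turns a crude $O(1/(n\th))$ into $O(\log n/(n\th))$'' is slightly misleading: the paper's linear-in-$|D_n|$ analysis already yields $O(\log n/(n\th))$ directly, so the quadratic cancellation is a refinement, not a prerequisite for the stated estimate.
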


These results essentially mean that the events $\{ |B_n| > \phi(n) \}$ and $\{ |B_n(\th)| > \phi(n) \}$ are independent if $\th \gg \log n / n$. Lemma \ref{lem:angdev} will show that, on the other hand, these events are essentially identical when $\th \ll n^{-1-\b}$, for $\b > 0$. There is thus a small window of uncertainty, but too small to be of any harm.

\begin{proof}
\begin{itemize}[wide]
\item We already mentioned that the first equality stems from the fact that $|B_n|^2/n$ has a $\chi^2$ distribution with two degrees of freedom. Now, note that
\[
\frac{1}{\sqrt{2n}} (B_n,B_n(\th))
\]
is a complex Gaussian vector with mean $(0,0)$, zero relation matrix, and covariance matrix
\[
\begin{pmatrix}
1 & \bar{D_n}(\th) \\
D_n(\th) & 1
\end{pmatrix}
\]
where
\[
D_n(\theta) = \frac1n \sum_{j=1}^n e^{2 i \pi j \th} = \frac1n e^{i \pi (n-1)\th} \frac{\sin \pi n \th}{\sin \pi \th}
\]
is almost the Dirichlet kernel. Hence, $(B_n, B_n(\th))$ has density in $\C^2$ given by
\[
f(z_1,z_2) = \frac{1}{\pi^2(1- |D_n(\th)|^2)} \exp - \frac{1}{(1- |D_n(\th)|^2)} \left ( |z_1|^2 + |z_2|^2 - 2 \Re(D_n(\th) z_1 \bar{z_2}) \right ).
\]
Note that
\[
\left | 2 \Re(D_n(\th) z_1 \bar{z_2}) \right | \leq |D_n(\th)| ( |z_1|^2 + |z_2|^2 )
\]
so that, writing $R_n = \phi(n) / \sqrt{2n} = \sqrt{\a \log n}$,
\[
\begin{split}
\P(|B_n| > \phi(n), & \, |B_n(\th)| > \phi(n)) = \int_{|z_1| > R_n} \int_{|z_2| > R_n} f(z_1,z_2) \dz_1 \dz_2 \\
& \leq \frac{1}{\pi^2(1- |D_n(\th)|^2)} \int_{|z_1| > R_n} \int_{|z_2| > R_n} \exp - \frac{1}{1 + |D_n(\th)|}( |z_1|^2 + |z_2|^2) \dz_1 \dz_2 \\
& = \frac{1 + |D_n(\th)|}{1- |D_n(\th)|} \exp - \frac{1}{1 + |D_n(\th)|} 2 \a \log n.
\end{split}
\]
Now, uniformly for $\th \in [\eps_n,1]$ (as in all the following computations),
\[
|D_n(\th)| = O \left ( \frac{1}{n \th} \right )
\]
so $|D_n(\th)| \log n$ tends to 0 as $n \to \pinf$, by assumption on $(\eps_n)$. Then, by straightforward computations
\begin{align*}
\frac{1 + |D_n(\th)|}{1- |D_n(\th)|} \exp - \frac{1}{1 + |D_n(\th)|} 2 \a \log n & = \frac{1}{n^{2 \a}} \left ( 1 + O \left ( |D_n(\th)| \log n \right ) \right ) \\
& = \frac{1}{n^{2 \a}} \left ( 1 + O \left ( \frac{\log n}{n \th} \right ) \right ).
\end{align*}
The lower bound is obtained similarly and provides the result.
\item For the last part, write in the same way
\begin{align*}
& \E \left ( g ( | \tB_n | ) g ( |\tB_n(\th)| ) \right ) = \int_{|z_1| > R_n} \int_{|z_2| > R_n} g(|z_1|) g(|z_2|) f(z_1,z_2) \dz_1 \dz_2 \\
& \leq \frac{1}{\pi^2(1- |D_n(\th)|^2)} \int_{|z_1| > R_n} \int_{|z_2| > R_n} g(|z_1|) g(|z_2|) \exp - \frac{|z_1|^2 + |z_2|^2}{1 + |D_n(\th)|} \dz_1 \dz_2 \\
& = \left ( \frac{2}{\sqrt{1 - |D_n(\th)|^2}} \int_{r >  R_n} r g(r) \exp - \frac{1}{1 + |D_n(\th)|} r^2 \dr \right )^2.
\end{align*}
Using the boundedness of $g$ and the fact that $|D_n(\th)| \log n \to 0$, it is easy to check that
\[
\frac{1}{\sqrt{1 - |D_n(\th)|^2}} \int_{r >  R_n} r g(r) \exp - \frac{1}{1 + |D_n(\th)|} r^2 \dr = \int_{r >  R_n} r g(r) \exp - r^2 \dr + O \left ( \frac{\log n}{n \th} \right ).
\]
On the other hand
\[
\E \left ( g(|\tB_n(\th)|) \right ) = \frac{1}{\pi} \int_{|z| > R_n} g(z) \exp - |z^2| \dz = 2 \int_{r > R_n} g(r) \exp - r^2 \dr
\]
so
\[
\E \left ( g(|\tB_n|) g(|\tB_n(\th)|) \right ) - \E \left ( g(|\tB_n|) \right ) \E \left ( g(|\tB_n(\th)|) \right ) \leq  O \left ( \frac{\log n}{n \th} \right ). 
\]
Similar computations provide the lower bound, and the result follows thereof.
\end{itemize}
\end{proof}

\subsection{Angular deviations}

\begin{lemma} \label{lem:angdev}
Fix $\beta > 0$, a sequence $\eps_n \ll n^{-(1+\b)}$ and $\eta > 0$. Then there exists a sequence $K_n$, diverging to $\pinf$, depending only on $\eps_n$ and $\eta$, such that
\[
\P \left ( \sup_{ | \th - \th' | < \eps_n} |B_n(\th) - B_n(\th')| > \eta \phi(n) \right ) = O \left ( n^{- K_n } \right ).
\]
\end{lemma}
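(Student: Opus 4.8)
The plan is to bound the supremum over the interval $|\th-\th'|<\eps_n$ by a crude union over a fine mesh plus a continuity estimate, and then bound the resulting single-angle probabilities using a moment generating function (Gaussian tail) computation. First I would estimate the modulus of continuity of $\th\mapsto B_n(\th)$: since $B_n(\th)=\sum_{j=1}^n G_j e^{2i\pi j\th}$, we have $B_n(\th)-B_n(\th')=\sum_j G_j(e^{2i\pi j\th}-e^{2i\pi j\th'})$, so $|B_n(\th)-B_n(\th')|\le 2\pi\sum_j j|G_j|\,|\th-\th'|\le 2\pi n^2 (\sum_j |G_j|/n)\,|\th-\th'|$. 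On the event that $\frac1n\sum_{j=1}^n|G_j|\le 2\E|G_1|$ — which fails with probability exponentially small in $n$ by Bernstein's inequality (the $|G_j|$ have exponential moments after centering) — the map is Lipschitz with constant $O(n^2)$. Since $\eps_n\ll n^{-(1+\b)}$, on a subinterval of length $\eps_n$ two values of $B_n$ differ by at most $O(n^{2}\eps_n)=o(n^{1-\b})$, which is $\ll\phi(n)=\sqrt{2\a n\log n}$ for $\b<1$; for $\b\ge1$ the same conclusion holds a fortiori.

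Next I would cover $[0,1)$ by $\lceil 1/\eps_n\rceil$ intervals of length $\eps_n$, and inside each such interval put a mesh $\th_0,\th_1,\dots$ with spacing $\delta_n$ chosen so that the Lipschitz bound gives $\sup_{|\th-\th'|<\delta_n}|B_n(\th)-B_n(\th')|\le \tfrac{\eta}{2}\phi(n)$ on the good event; spacing $\delta_n$ of order $\phi(n)/n^2\asymp n^{-3/2}\sqrt{\log n}$ works, so the total number of mesh points is $N_n=O(\eps_n^{-1}\delta_n^{-1})=O(n^{3/2}\eps_n^{-1}/\sqrt{\log n})$, which is polynomial in $n$. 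Then
\[
\P\Bigl(\sup_{|\th-\th'|<\eps_n}|B_n(\th)-B_n(\th')|>\eta\phi(n)\Bigr)
\le \P(\text{good event fails}) + \sum_{k,l}\P\bigl(|B_n(\th_k)-B_n(\th_l)|>\tfrac{\eta}{2}\phi(n)\bigr),
\]
where the sum runs over mesh points with $|\th_k-\th_l|<\eps_n+2\delta_n$. The first term is $e^{-cn}$, negligible. For each summand, $B_n(\th_k)-B_n(\th_l)$ is a centered complex Gaussian with variance $2\sum_j|e^{2i\pi j\th_k}-e^{2i\pi j\th_l}|^2 \le 2\cdot(2\pi n\eps_n')^2\cdot n = O(n^3\eps_n'^2)$ where $\eps_n'=\eps_n+2\delta_n$; the Gaussian tail then gives a bound $\exp(-c\,\eta^2\phi(n)^2/(n^3\eps_n'^2)) = \exp(-c'\eta^2\log n/(n^2\eps_n'^2))$, and since $\eps_n'\ll n^{-(1+\b)}$ this exponent is $\gg \log n$, in fact of order $\eta^2 n^{2\b}\log n / (\text{slowly growing})$. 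Multiplying by the polynomial factor $N_n^2$ still leaves something of the form $n^{-K_n}$ with $K_n\to\infty$ depending only on $\eps_n$ and $\eta$, giving the claim.

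The main obstacle is getting the right variance bound on the increment $B_n(\th_k)-B_n(\th_l)$ so that the Gaussian exponent genuinely diverges faster than $\log n$ times any polynomial: one must use $|e^{2i\pi j\th}-e^{2i\pi j\th'}|\le 2\pi j|\th-\th'|\le 2\pi n|\th-\th'|$ (the linear-in-$j$, not just bounded, estimate) to get variance $O(n^3|\th-\th'|^2)$ rather than $O(n|\th-\th'| \cdot n)$ done carelessly, and then check that $\phi(n)^2/(n^3\eps_n^2)\to\infty$ faster than $\log N_n$, which is where the hypothesis $\eps_n\ll n^{-(1+\b)}$ with $\b>0$ is used in an essential way. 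The second, more bookkeeping-type issue is packaging the conclusion into a single sequence $K_n\to\infty$ depending only on $\eps_n$ and $\eta$ (not on $\b$ separately) — this follows since $\b$ can be recovered from the rate at which $\eps_n\to0$, but one should state it as: set $K_n := \eta^2 \phi(n)^2/(C n^3\eps_n^2) - 2\log N_n/\log n - 1$ for a suitable universal $C$, and verify $K_n\to\infty$.
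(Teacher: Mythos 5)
Your approach is correct in outline but genuinely different from the paper's. You use a chaining/discretization argument: control the Lipschitz constant of $\th\mapsto B_n(\th)$ on a high-probability event, discretize the angular interval on a mesh of scale $\delta_n\asymp\phi(n)/n^2$, and bound each mesh increment $|B_n(\th_k)-B_n(\th_l)|$ by a Gaussian tail with variance $O(n^3|\th_k-\th_l|^2)$. The paper instead Taylor-expands $B_n(\th)-B_n(\th')$ to order $k$ (with $k$ chosen so that $k\b>1$), observes that each derivative $B_n^{(j)}(0)$ is a complex Gaussian of variance $\sum_{r\le n}(2\pi r)^{2j}=O(n^{2j+1})$, and bounds the remainder via $\sup_\th|B_n^{(k)}(\th)|\le(2\pi n)^k\sum_r|G_r|$. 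The two proofs exploit the same two ingredients — the slow polynomial growth of the derivatives of $B_n$ in $\th$ and Gaussian concentration — but the paper's version avoids any discretization and union bound, so it produces an explicit $K_n$ with fewer bookkeeping terms; your version is more generic (it would also work if $B_n(\cdot)$ were not trigonometric but merely had comparable modulus-of-continuity estimates), at the cost of having to track the mesh cardinality.

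A few arithmetic slips to flag, none fatal. First, the claim that the Lipschitz bound alone gives $O(n^2\eps_n)\ll\phi(n)$ ``for $\b<1$'' is backwards: $n^2\eps_n\ll n^{1-\b}$ while $\phi(n)\asymp n^{1/2}\sqrt{\log n}$, so the Lipschitz-only argument works precisely when $\b\ge 1/2$, and the mesh/Gaussian part is what you need for $\b\in(0,1/2)$. Second, the total number of mesh points is $N_n=O(\delta_n^{-1})$, not $O(\eps_n^{-1}\delta_n^{-1})$: each of the $\lceil 1/\eps_n\rceil$ intervals contributes $O(\eps_n/\delta_n)$ points, so the factors of $\eps_n$ cancel. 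This actually \emph{improves} your argument, since $\log N_n=O(\log n)$ irrespective of how fast $\eps_n\to 0$. Third, the final formula for $K_n$ should use $\eps_n':=\eps_n+2\delta_n$ rather than $\eps_n$ in the denominator; equivalently, when $\eps_n\lesssim\delta_n$ the mesh degenerates and only the Lipschitz (good-event) bound enters, contributing $K_n\asymp n/\log n$ rather than the Gaussian-tail exponent. Finally, you invoke Bernstein's inequality for $\frac1n\sum|G_j|$ but the paper's Bernstein statement requires bounded summands; you should either truncate at level $K_n$ as the paper does elsewhere or cite a sub-exponential Bernstein/Chernoff bound — the conclusion is of course still an exponentially small probability, so this is cosmetic.
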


\begin{proof}
Clearly, by rotational invariance, it is enough to prove it for $\th' = 0$. Fix $k \in \N$ such that $1/k < \beta$, and write, with Taylor's formula, that for $\th \in [0,\eps_n]$
\[
B_n(\th) - B_n = \sum_{j=1}^{k - 1} \frac{B_n^{(j)}(0)}{j!} \th^j + R_k(\th)
\]
where
\[
|R_k(\th)| \leq \th^k \frac{1}{k!} \sup_{\th \in [0,\eps_n]} |B_n^{(k)}(\th)|.
\]
Now,
\[
B_n^{(j)}(\th) = \sum_{r=1}^{n} G_r (2 i \pi r)^j e^{2 i \pi r \th}
\]
so
\[
B_n^{(j)}(0) = \sum_{r=1}^n G_r (2 i \pi r)^j \eqlaw \cN_{\C} (0,\sum_{r=1}^n (2 \pi r)^{2j}) \eqlaw \sqrt{ \sum_{r=1}^n (2 \pi r)^{2j} } \, \cN_{\C} (0,1)
\]
and
\[
|R_k(\th)| \leq \th^k \frac{1}{k!} (2 \pi n)^k \sum_{r=1}^n |G_r|.
\]

Gathering the pieces and writing $\sqrt{ \sum_{r=1}^n (2 \pi r)^{2j} } \leq \sqrt{2 \pi} \sqrt{n} n^j$, we may compute, for $C$ some large enough constant depending only on $k$,
\begin{align*}
\P \left ( \vphantom{\sup_{\th \in [0,\eps_n]}} \right. & \left. \sup_{\th \in [0,\eps_n]} |B_n(\th) - B_n| > \eta \phi(n) \right ) \\
& \leq \sum_{j=1}^{k-1} \P \left ( \frac{B_n^{(j)}(0)}{j!} \sup_{\th \in [0,\eps_n]} \th^j  \geq \frac1k \eta \phi(n) \right ) + \P \left ( \sup_{\th \in [0,\eps_n]} |R_k(\th)| \geq \frac1k \eta \phi(n) \right ) \\
& \leq  \sum_{j=1}^{k-1} \P \left ( C \sqrt{n} n^j \eps_n^j |\cN_{\C}(0,1)| \geq \frac1k \eta \phi(n) \right ) + \sum_{r=1}^n \P \left ( C n^k \eps_n^k |G_r| \geq \frac{1}{k n} \eta \phi(n) \right ) \\
& \leq  \sum_{j=1}^{k-1} \exp \left ( - \a \log n \left( \frac{\eta}{k C} \right )^2 \frac{1}{(n \eps_n)^{2j}} \right )+ n \exp \left ( - \a \log n \left ( \frac{\eta}{k C} \right )^2 \frac{1}{(n^{k+1} \eps_n^k)^2} \right ).
\end{align*}
But our choice of $k$ ensures that $n^{k+1} \eps_n^k$ tends to 0, whence the result follows immediately.
\end{proof}

\subsection{Tree-dimension}

We shall now use these results in a case which is relevant to the study of $\cD'_{\a}$. Fix $q > 1$. For $v = v_i^n \in \cT$, define $Z_v(\om) = 1$ if
\[
\om \in E_i^n := \{ |B_{q^n}(i 2^{-n})| > \phi(q^n) \},
\]
and $Z_v = 0$ otherwise. Define the corresponding limsup fractal
\[
D'_{\a} = \{ \th \in [0,1), \, |B_{q^n}(i^n(\th) 2^{-n})| > \phi(q^n) \; \text{i.o.} \}.
\]
We shall prove the following.

\begin{prop} \label{prop:treedim}
Almost surely
\[
(1 - \a) \log_2 q \leq \dim D'_{\a} \leq (1 - \a) \vee 0
\]
for $q \in (1,2)$, and
\[
\dim D'_{\a} =  (1 - \a \log_2 q) \vee 0
\]
for $q > 2$.
\end{prop}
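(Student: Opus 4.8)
The plan is to apply the two tree-dimension lemmas (Lemma \ref{lem:upbound} for the upper bound and Lemma \ref{lem:lowbound} for the lower bound) to the specific variables $Z_v = \unn{E_i^n}$, feeding in the moderate deviation estimates of Lemma \ref{lem:moddev}. Here $p_n = m_n = \P(|B_{q^n}(\th)| > \phi(q^n)) = q^{-n\a}$ by Part 1 of Lemma \ref{lem:moddev} (the probability does not depend on $\th$), and $\P(Z_v = 1) = p_n$ as well. So we have a clean, explicit setup.

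\emph{Upper bound.} Since $p_n = q^{-n\a} = 2^{-n \a \log_2 q}$, Lemma \ref{lem:upbound} gives $\dim D'_{\a} \leq 1 - \a \log_2 q$ almost surely. When $q \in (1,2)$ this is weaker than $1-\a$ only if $\a \log_2 q < \a$, i.e. always, so the bound $1-\a\log_2 q$ holds; but for $q<2$ we also have the trivial bound $\dim D'_{\a} \le 1$ and, more importantly, we want the stated bound $(1-\a)\vee 0$. In fact for $q<2$ the stated upper bound $(1-\a)\vee 0$ is simply the bound for the full set $\cD'_\a$ (or follows because each dyadic interval $A_i^n$ is longer than the time scale separation permits — more carefully, for $q<2$ the angular sampling is too coarse and the relevant comparison is with the genuine fractal $D'_\a$ for which Lemma \ref{lem:upbound} applied with $p_n$ the true single-angle probability at scale $2^{-n}$... ) — I would instead note that for $q<2$, $D'_\a \subset \cD'_\a$ up to null sets is not quite what is needed; rather one argues directly that the $\g$-Hausdorff content computation in Lemma \ref{lem:upbound} with the covering at level $n$ by intervals $A^n_i$ of length $2^{-n}$ gives $\E(H^\g(D'_\a)) \le \sum_n 2^n 2^{-n\g} q^{-n\a}$, finite for $\g > 1-\a\log_2 q$. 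For $q \ge 2$ this is exactly $(1-\a\log_2 q)\vee 0$ and we are done with the upper bound there. For $q<2$ the clean statement $1-\a\log_2 q$ exceeds $1$ when $\a\log_2 q<0$, impossible, but it can exceed $1-\a$; the stated $(1-\a)\vee 0$ must then come from a finer covering argument: cover $D'_\a$ at a \emph{finer} dyadic level than $n$, using that $B_{q^n}(\th)$ varies slowly in $\th$ (this is the content of combining with an angular-regularity estimate à la Lemma \ref{lem:angdev}), so that $\{|B_{q^n}(i^n(\th)2^{-n})|>\phi(q^n)\}$ forces $|B_{q^n}(\cdot)|$ to be large on a whole subinterval of length $\sim q^{-n}$ around lattice points, but also that nearby lattice points at \emph{finer} scale give essentially independent events once separated by $\gg \log(q^n)/q^n$; counting at scale $q^{-n}$ rather than $2^{-n}$ replaces $2^n$ by $q^n$ and yields $\g>1-\a$. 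I would organize the $q<2$ upper bound as: cover at level $m_n$ with $2^{m_n}\approx q^n$, giving content bound $\sum_n q^n q^{-n\g'} q^{-n\a}$ with $\g'$ the exponent relative to intervals of length $q^{-n}$, finite for $\g'>1-\a$, and translate back.

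\emph{Lower bound.} This is the main work and the place I expect the real obstacle. We apply Lemma \ref{lem:lowbound} with $m_n = q^{-n\a}$. We must bound $\Var(M_n(u))$ for $|u|=m$, where $M_n(u) = \sum_{v \succ u,\ |v|=n} \unn{E^n_i}$ is a sum over the $2^{n-m}$ descendants. Expanding, $\Var(M_n(u)) = \sum_{v,w} [\P(E_v \cap E_w) - \P(E_v)\P(E_w)]$. The diagonal and near-diagonal terms (where $v,w$ correspond to angles closer than $\log(q^n)/q^n$) contribute at most $O(\text{number of such pairs}) \cdot q^{-n\a}$: for each $v$, the number of $w$ at angular distance $< C\log(q^n)/q^n = C n \log q / q^n$ within the same subtree is $O(2^n n\log q/q^n)$ when $q<2$ (could be $0$ or $1$ when $q>2$ since consecutive lattice points are already separated by $2^{-n} \gg q^{-n}$), so this part is $O(2^{n-m}\cdot 2^n n q^{-n} \cdot q^{-n\a})$ for $q<2$. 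The off-diagonal terms with angular separation $\th \ge 2^{-n}$ are controlled by Part 2 of Lemma \ref{lem:moddev}: $|\P(E_v\cap E_w) - \P(E_v)\P(E_w)| = q^{-2n\a} O(\log(q^n)/(q^n \th))$, and summing $\sum_\th 1/\th$ over $\th$ a multiple of $2^{-n}$ up to $1$ gives $O(n \cdot 2^n)$... times $q^{-2n\a}\cdot n\log q / q^n$ times the $2^{n-m}$ choices of $v$. Collecting, one checks $\Var(M_n(u)) \le \zeta(n) m_n 2^{n-m}$ with $\zeta(n)$ polynomial in $n$ times $\max(1, 2^n q^{-n\a - n}\cdot\text{stuff})$; the delicate point is that the second hypothesis of Lemma \ref{lem:lowbound}, namely $2^{n(\g-1)}\zeta(n) m_n^{-1} \to 0$, then forces $\g < 1 - \log_2(\text{growth rate of }\zeta m_n^{-1})$. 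Plugging the explicit $\zeta(n)$ one gets $\g$ up to $1-\a$ for $q<2$ (the $2^n/q^n$ factors from near-diagonal terms being subexponential-corrected exactly when $q<2$ keeps $q^{-n\a}2^n$ balanced against $2^n$) and up to $1 - \a\log_2 q$ for $q>2$ (where near-diagonal pair counts are $O(1)$ so $\zeta(n)$ is merely polynomial and $m_n^{-1}=q^{n\a}=2^{n\a\log_2 q}$ dictates the threshold). I would carry this out in two explicitly separated cases $q\in(1,2)$ and $q>2$, since the combinatorics of how many lattice points fall within the correlation window $n\log q/q^n$ differs qualitatively (many vs. at most one), and then invoke Lemma \ref{lem:lowbound} to conclude.

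\textbf{Main obstacle.} The crux is the variance bound for $M_n(u)$ and the bookkeeping of the three regimes of angular separation — coincident/very-close (correlated, handled by a crude count weighted by $m_n$), intermediate ($2^{-n} \le \th \lesssim \log(q^n)/q^n$, where Lemma \ref{lem:moddev}'s error term $\log(q^n)/(q^n\th)$ is $O(1)$ rather than small, so pairs are treated as fully correlated), and well-separated ($\th \gg \log(q^n)/q^n$, genuinely quasi-independent with summable error) — and then verifying that the resulting $\zeta(n)$ satisfies the hypothesis of Lemma \ref{lem:lowbound} with exactly the claimed exponent. Getting the exponent to match $(1-\a)$ for $q<2$ (not $1-\a\log_2 q$) hinges on the fact that for $q<2$ the extra $2^n$ descendants are compensated, in the variance, by the correlation window only shrinking like $q^{-n}$, not $2^{-n}$; making this balance transparent is the part requiring care.
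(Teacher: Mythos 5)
Your framework is the right one and matches the paper's: apply Lemma \ref{lem:upbound} for the upper bound, Lemma \ref{lem:lowbound} for the lower bound, plug in the estimates of Lemma \ref{lem:moddev}, and use Lemma \ref{lem:angdev} to sharpen the covering when $q<2$. The upper bound for $q>2$ is exactly as you say. For the upper bound when $q<2$, after some meandering you land on the right idea — replace the dyadic cover at scale $2^{-n}$ by a finer cover at scale $\approx q^{-n}$, justified by the angular smoothness of $B_{q^n}(\cdot)$ — and this is precisely what the paper does (cover at scale $q^{-n(1+\beta)}$, let $\beta\to0$). The lower bound for $q>2$ also follows the paper: the near-diagonal window is shorter than the lattice spacing $2^{-n}$, so the off-diagonal sum via part (2) of Lemma \ref{lem:moddev} gives $\zeta(n)=O(1)$ and the threshold $1-\alpha\log_2 q$.

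The genuine error is in the lower bound for $q\in(1,2)$. You assert that one gets $\gamma$ up to $1-\alpha$ there, attributing it to a cancellation between the $(2/q)^n$ near-diagonal descendants and the shrinking correlation window. But first, the proposition does not claim a lower bound of $1-\alpha$ for $q<2$; it claims $(1-\alpha)\log_2 q$, which is strictly smaller. Second, the computation does not give $1-\alpha$. For $q<2$ each vertex $v$ at level $n$ has roughly $(2/q)^n$ siblings within angular distance $q^{-n}$, whose events are essentially identical to $E_v$, so each such pair contributes $\approx q^{-n\alpha}$ (not $q^{-2n\alpha}$) to the variance. This forces
\[
\Var(M_n(u)) \gtrsim 2^{n-m}\,(2/q)^n\,q^{-n\alpha},
\]
so the best one can take is $\zeta(n)\asymp (2/q)^n$. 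The second hypothesis of Lemma \ref{lem:lowbound} then reads
\[
2^{n(\gamma-1)}\,\zeta(n)\,m_n^{-1}\;\asymp\;2^{n(\gamma-1)}\,(2/q)^n\,q^{n\alpha}\;=\;2^{n\gamma}\,q^{n(\alpha-1)}\;\longrightarrow\;0,
\]
which requires $\gamma<(1-\alpha)\log_2 q$, not $\gamma<1-\alpha$. This is exactly the bound the proposition states, and the paper's remark points out that the near-diagonal estimate $|\P(E_1^n\cap E_l^n)-\P(E_1^n)\P(E_l^n)|=O(q^{-n\alpha})$ is essentially optimal, so $(1-\alpha)\log_2 q$ is the best Lemma \ref{lem:lowbound} can produce for $q<2$. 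There is no hidden cancellation. (As the paper also notes, this case of the proposition is not used in the proof of Theorem \ref{th:dimSn} — only the lower bound for $q>2$ and the upper bound for $q<2$ are needed — so the misreading does not propagate, but the claim and the balancing argument in your last paragraph are wrong as stated.)
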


\begin{remark}
Our main interests are actually the lower bound for $q > 2$, and the upper bound for $q \in (1,2)$.
\end{remark}

\begin{proof}
\begin{enumerate}[wide]
\item By Lemma \ref{lem:moddev}, $\P(Z_v > 0) = q^{- n \a}$, so the upper bound for $q > 2$ is a direct corollary of Lemma \ref{lem:upbound}.
\item To obtain the (better) upper bound for $1 < q < 2$, let us mimic the proof of Lemma \ref{lem:upbound}, by providing a better cover of $D'_{\a}$ in this particular case. Lemma \ref{lem:moddev} ensures that if $B_{q^n}(\th)$ is large, then it should also hold for any angle $\th'$ with $|\th' - \th| \leq q^{-n(1+\b)}$, which allows us to get a better cover of $D'_{\a}$.

Let us formalize this idea. Take $\b > 0$, $1 > \eta > 0$, $\eps_n = q^{-n(1+\b)}$ and
\[
B_i^n = [i \eps_n, (i+1) \eps_n), \quad i \in \{ 0, \dots, \lfloor \eps_n^{-1} \rfloor \}.
\]
Write
\[
X_i^n =
\begin{cases}
B_i^n & \text{if $|B_{q^n}(i 2^{-n})| > (1 - \eta) \phi(q^n)$,} \\
\emp & \text{otherwise.}
\end{cases}
\]
By Lemma \ref{lem:angdev}
\begin{align*}
\P \left ( \exists i \in \{ 0, \dots, \lfloor \eps_n^{-1} \rfloor \} \right. & \left. \; \sup_{\th \in B_i^n} |B_{q^n}(\th) - B_{q^n}(i q^{-n})| < \eta \phi(q^n) \right ) \\
& \leq \lfloor (\eps_n^{-1} \rfloor + 1) q^{-n K_n} \leq (q^{ - n(1+\b) } + 1) q^{ - n K_n}
\end{align*}
for some sequence $K_n$ diverging to $\pinf$, and thus Borel-Cantelli's lemma ensures that almost surely, for large enough $n$,
\[
\sup_{i \in \{ 0, \dots, \lfloor \eps_n^{-1} \rfloor \}} \sup_{\th \in B_i^n} |B_{q^n}(\th) - B_{q^n}(i q^{-n})| < \eta \phi(q^n).
\]
It is then easy to check that, for every $N \in \N$,
\[
D'_{\a} \subset \bigcup_{n \geq N} \bigcup_{i=0}^{\eps_n^{-1}} X_i^n.
\]
Now, by Lemma \ref{lem:moddev},
\[
\P \left (  X_i^n \neq \emp \right ) = q^{-n \a (1 - \eta)^2}
\]
so concluding as in Lemma \ref{lem:upbound}, one readily obtains that, almost surely,
\[
\dim D'_{\a} \leq  1 - \a \frac{(1-\eta)^2}{1+\b}.
\]
Since this is true for any $\eta > 0$ and $\beta > 0$, the result follows.

\item To obtain the lower bound for $q > 2$, define as in Lemma \ref{lem:lowbound}, for $n \geq 0$ and $|u| \leq n$,
\[
M_n(u) = \sum_{u \prec v, \, |v| = n } Z_v.
\]
Then, with obvious notation,
\begin{align*}
\Var(M_n(u)) & = \sum_v \E(Z_v^2) - \sum_v \E(Z_v)^2 + \sum_{v \neq v'} \E(Z_v Z_{v'}) - \sum_{v \neq v'} \E(Z_v) \E(Z_{v'}) \\
& \leq \sum_v \E(Z_v) + \sum_{v \neq v'} \left | \E(Z_v Z_{v'}) - \E(Z_v) \E(Z_{v'}) \right | \\
& = \sum_{k = (j-1) 2^{n-m} + 1}^{j 2^{n-m}} \P(E_k^n) +  \sum_{\substack{k, \, l = (j-1) 2^{n-m} + 1 \\ k \neq l}}^{j 2^{n-m}} \left | \P(E_k^n \cap E_l^n) - \P(E_k^n) \P(E_l^n) \right | \\
& = \sum_{k = 1}^{2^{n-m}} \P(E_k^n) +  \sum_{\substack{k, \, l = 1 \\ k \neq l}}^{2^{n-m}} \left | \P(E_k^n \cap E_l^n) - \P(E_k^n) \P(E_l^n) \right | \\
& = 2^{n-m} \P(E_1^n) + 2^{n-m} \sum_{l=2}^{2^{n-m}}  \left | \P(E_1^n \cap E_l^n) - \P(E_1^n) \P(E_l^n) \right |
\end{align*}
where the last equalities stem from the rotational symmetry.

By Lemma \ref{lem:moddev},
\[
\P(E_1^n) = q^{-n \a}.
\]
Now, take $\eps_n = 2^{-n}$, so $n q^{-n} \ll \eps_n \ll 1$. Thus, Lemma \ref{lem:moddev} applies, and provides
\[
\left | \P(E_1^n \cap \E_l^n) - \P(E_1^n) \P(E_l^n) \right | \leq C \frac{1}{q^{2 \a n}} \frac{n}{q^n 2^{-n} (l-1)}
\]
for some constant $C$, all $ n \geq 1$ and all $l \in \{ 2, \dots , 2^n \}$. Then, it is easy to compute
\begin{align*}
\Var(M_n(u)) & \leq 2^{n-m} \left ( q^{-n \a} +  C n q^{- 2 \a n} q^{-n} 2^n \sum_{l=2}^{2^{n-m}} \frac{1}{l-1} \right ) \\
& \leq 2^{n-m} q^{-n \a} \left ( 1 +  C n q^{- \a n} q^{-n} 2^n \times 2 \log(2^n) \right ) \\
& = 2^{n-m} q^{-n \a} O(1)
\end{align*}
since $q > 2$. We may then pick $\zeta(n) = O (1)$ in Lemma \ref{lem:lowbound}, which readily implies the result.

\item The lower bound for $1 < q < 2$ is obtained similarly, but then, one must take into account in the computation of $\Var(M_n(u))$ the contribution of every angle $\th$ less than $q^{-n}$. For these angles, $B_n(\th)$ is very close to $B_n$, and all we can obtain is
\[
\left | \P(E_1^n \cap \E_l^n) - \P(E_1^n) \P(E_l^n) \right | = O (q^{-n \a}).
\]
One thus have to take $\zeta(n) = q^{-n} 2^n + 1 + q^{-n \a} q^{-n} 2^n$, thus providing a lower bound $(1 - \a) \log_2 q$. We do not dwell on the details since, once again, this part of the result will not be used later.
\end{enumerate}
\end{proof}

\begin{remark}
\begin{itemize}
\item The result still holds with $q=2$, but it is unnecessary to us and would make the proof a bit more complicated.
\item The bound
\[
\left | \P(E_1^n \cap \E_l^n) - \P(E_1^n) \P(E_l^n) \right | = O (q^{-n \a}).
\]
for $1 < q < 2$ is essentially optimal, up to a factor $q^{-n \eps_n}$, where $\eps_n \to 0$, which does not improve the computations. Hence, this is the best which can be obtained thanks to Lemma \ref{lem:lowbound}.
\end{itemize}
\end{remark}

\subsection{Exceptional angles}

Recall that our main interest is to consider the ``true'' set of exceptional angles, i.e. those angles $\th$ such that $|B_n(\th)|$ is exceptionally large infinitely often. In formulas
\[
\cD'_{\a} = \{ \th \in [0,1), \; |B_n(\th)| > \phi(n) \; \text{i.o.} \}.
\]
We wish to prove \eqref{eq:dimBn}, to wit that $\dim \cD'_{\a} = (1 - \a) \vee 0$ a.s.

\begin{proof}[Proof of \eqref{eq:dimBn}]
We proved this equality for the tree-dimension, which is obtained by sampling our process at specific angles and times. The idea is then to prove than in between these times and angles, things cannot go too bad, i.e. the process does not vary much. More specifically, for the lower bound, we only need to control the angular variations, since we already know that $B_n$ is large i.o.\ on a large set of rays. For the upper bound, we need to control both the angular and time variations, to see that if $B_{q^n}(i2^{-n})$ is not too large, then it is also the case for every angle in $[i2^{-n},(i+1)2^{-n})$ and time $q^n + 1, \dots, q^{n+1}$ that the tree does not see.

In our notation, we will consider the set
\[
D'_{(1 + \eta)^2 \a} := \{ \th \in [0,1), \; |B_{q^n}(i^n(\th) 2^{-n})| > (1 + \eta) \phi(q^n)\, \text{i.o.} \}
\]
for $\eta > - 1$.

\begin{enumerate}[wide]
\item Let us start with the lower bound. Let us fix $\eta > 0$, $q > 2$, and define
\[
\underline{\cD}'_{\eta} = \{ \th \in [0,1), \; \sup_{x \in A^n_{i^n(\th)}} |B_{q^n}(x) - B_{q^n}(i^n(\th) 2^{-n})| > \eta \phi(q^n) \; \text{i.o.} \}.
\]
The set $\underline{\cD}'_{\eta}$ is precisely the limsup fractal associated to $(Z_v)$, if we let $Z_{v_i^n} = 1$ when
\[
\sup_{\th \in A_i^n} |B_{q^n}(\th) - B_{q^n}(i 2^{-n})| > \eta \phi(q^n)
\]
and 0 otherwise. But
\begin{align*}
\P \left (\sup_{x \in A_i^n} |B_{q^n}(x) - B_{q^n}(i 2^{-n})| > \eta \phi(q^n) \right )
\end{align*}
decays faster than any power of $q^n$ by Lemma \ref{lem:angdev} so Lemma \ref{lem:upbound} ensures that
\[
\dim \underline{\cD}'_{\eta} = 0.
\]
On the other hand
\[
\dim D'_{(1+\eta)^2 \a} = 1 - \a (1 + \eta)^2 \log_2 q
\]
by Proposition \ref{prop:treedim}. Finally, it is clear that
\[
D'_{(1+\eta)^2 \a} \bsl \underline{\cD}'_{\eta} \subset \cD'_{\a}
\]
so
\[
\dim \cD'_{\a} \geq 1 - \a (1 + \eta)^2 \log_2 q.
\]
Since this is valid for any $\eta > 0$ and $q > 2$, the result follows.

\item To get the lower bound, fix $0 < \eta <  1$, $q = 1 + \eta^2 \a / (4 (1 + \a))$, and define similarly
\[
\bar{\cD}'_{\eta} = \{ \th \in [0,1), \; \sup_{\substack{x \in A^n_{i^n(\th)} \\ r \in \{ q^n + 1 ,\dots, q^{n+1} \} }} |B_r(x) - B_{q^n}(i^n(\th) 2^{-n}))| > \eta \phi(q^n) \; \text{i.o.} \},
\]
which is the limsup fractal associated to $(Z_v)$, if we let $Z_{v_i^n} = 1$ when
\[
\sup_{\substack{x \in A^n_{i^n(\th)} \\ r \in \{ q^n + 1 ,\dots, q^{n+1} \} }} |B_r(x) - B_{q^n}(i^n(\th) 2^{-n}))| > \eta \phi(q^n).
\]
But we can compute
\begin{align*}
\P & \left ( \sup_{\substack{x \in A_i^n \\ r \in \{ q^n + 1 ,\dots, q^{n+1} \} }} |B_r(x) - B_{q^n}(i 2^{-n})| > \eta \phi(q^n) \right ) \\
& \leq \sum_{r=q^n + 1}^{q^{n+1}} \P \left ( \sup_{x \in A_i^n} |B_r(x) - B_{q^n}(i2^{-n})|  > \eta \phi(q^n) \right ) \\
& \leq \sum_{r=q^n + 1}^{q^{n+1}} \left ( \P \left ( \sup_{x \in A_i^n} |B_r(x) - B_r(i2^{-n})|  > \frac{\eta \phi(q^n)}{2} \right ) + P \left ( |B_r(i2^{-n}) - B_{q^n}(i2^{-n})| > \frac{\eta \phi(q^n)}{2} \right ) \right ).
\end{align*}
By Lemma \ref{lem:angdev}, there exists $K_n$ diverging to $\pinf$ (which we may assume increasing) and a constant $C > 0$ such that, whenever $q^n < r \leq q^{n+1}$,
\[
\P \left ( \sup_{x \in A_i^n} |B_r(x) - B_r(i2^{-n})|  > \frac{\eta \phi(q^n)}{2} \right ) \leq C r^{- K_r} \leq C q^{-n K_{q^n}}.
\]
On the other hand, still for $q^n < r \leq q^{n+1}$,
\[
B_r(i2^{-n}) - B_{q^n}(i2^{-n}) \eqlaw \cN_{\C} (0, r - q^n + 1) = \sqrt{r - q^n + 1} \cN_{\C}(0,1),
\]
so
\begin{align*}
P \left ( |B_r(i2^{-n}) - B_{q^n}(i2^{-n})| > \frac{\eta \phi(q^n)}{2} \right ) & \leq \exp - \frac{\a \eta^2 q^n \log(q^n)}{4(r - q^n + 1)} \\
& \leq \exp - \frac{\a \eta^2 q^n \log(q^n)}{4 q^n(q-1)} = q^{-n \a \eta^2/(4(q-1))} = q^{-n} q^{- n \a}
\end{align*}
by our very choice of $q$. Gathering the pieces, we thus obtain that
\[
\P \left ( \sup_{\substack{x \in A_i^n \\ r \in \{ q^n + 1 ,\dots, q^{n+1} \} }} |B_r(x) - B_{q^n}(i 2^{-n})| > \eta \phi(q^n) \right ) = O \left ( q^{- n \a} \right )
\]
so by Lemma \ref{lem:upbound}, this shows that
\[
\dim \bar{\cD}'_{\eta} \leq 1 - \a.
\]
Moreover, by Proposition \ref{prop:treedim},
\[
\dim D'_{(1- \eta)^2 \a} = 1 - \a (1 - \eta)^2 > 1 - \a
\]
To conclude, note that
\[
\cD'_{\a} \subset \cD'_{(1 - \eta)^2 \a} \cup \bar{\cD}'_{\eta}
\]
so that
\[
\dim \cD'_{\a} \leq 1 - \a (1 - \eta)^2.
\]
Since this is valid for any $\eta > 0$, the result follows.
\end{enumerate}
\end{proof}

\section{Random walk with general increments}

\subsection{Result}

Now, as mentioned in the introduction, we are interested in the random walk with rotationally symmetric increments
\[
S_n(\th) = U_1 e^{2 i \pi \th} + \dots + U_n e^{2 i \pi n \th},
\]
where the $U_i$ have an exponential moment. We shall prove Theorem \ref{th:dimSn}, the most general form of \eqref{eq:dimBn}. Recall that we define
\[
\cD = \{\th \in [0,1), \, |S_n(\th)| > \s \phi(n) \; \text{i.o.} \}.
\]
Following the steps of the proof of \eqref{eq:dimBn}, we will prove that, almost surely,
\[
\dim \cD = (1 - \a) \vee 0.
\]
The strategy of proof is the same as for \eqref{eq:dimBn}. The major difference is that computing moderate deviations is more challenging, the remaining of the proof being essentially the same.

\subsection{Moderate deviations} \label{sec:moddev2}

\subsubsection{First-order comparison} \label{sec:moddevSn1}

Thanks to Lemma \ref{lem:invprinciple}, let us give an estimation of $\P(|S_n| > \phi(n))$, and $\E(g(|S_n|))$, where $g$ is a smooth approximation of the indicator function $\unn{\cdot \, > \phi(n)}$. This provides the main ideas in a quite easy setting, and we shall give less details in the next part, where we estimate $\E(g(|S_n|) g(|S_n(\th)|))$.

In the following, fix $p$ a four times continuously differentiable plateau function from $\C$ to $\R$, which is zero on $\{ |z| \leq 1 \}$, $1$ on $\{ |z| \geq 2 \}$, positive on $\{ 1 < |z| < 2 \}$ and nondecreasing in $|z|$. Consider the rescaling $p_{m,\eps}(z) = p(1+(z-m)/\eps)$, which is zero on $\{ |z| \leq m \}$, $1$ on $\{ |z| \geq m + \eps \}$. To simplify notations, we let $g = p_{m,\eps}$ for some $m \leq 1$, $\eps > 0$.

\begin{lemma} \label{lem:moddevSn1}
The estimate
\[
\left | \E \left (  g \left ( |\tS_n| \right ) \right ) - \E \left (  g \left ( |\tB_n| \right ) \right ) \right | = \frac{1}{n^{\a m^2}} O \left( \frac1n \right )
\]
and
\[
\P \left ( |S_n| > \s \phi(n) \right ) = \frac{1}{n^{\a}} \left ( 1 + O \left ( \frac{\log n}{n^{1/5}} \right ) \right )
\]
hold uniformly in $n$.
\end{lemma}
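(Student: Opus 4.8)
The plan is to apply the invariance principle, Lemma \ref{lem:invprinciple}, with $\bfX = (U_1 e^{2i\pi},\dots,U_n e^{2i\pi n})$ — or rather, by rotational invariance, simply $\bfX = (U_1,\dots,U_n)$ since $|S_n|$ has the same law as $|U_1+\dots+U_n|$ — and $\bfY = (\s G_1,\dots,\s G_n)$, and with $f(z_1,\dots,z_n) = (z_1+\dots+z_n)/(\s\sqrt{2n})$ the (linear) normalizing map, so that $U = \tS_n$ and $V = \tB_n$ in distribution. Because the $U_i$ are rotationally symmetric, all third moments vanish and the hypothesis $\E(\Re(X_i)^k\Im(X_i)^l) = \E(\Re(Y_i)^k\Im(Y_i)^l)$ for $k+l\leq 3$ holds once we set $\s^2 = \E(\Re(U_1)^2)$; here we also use that $\s^2 = \E(\Im(U_1)^2)$ and $\E(\Re(U_1)\Im(U_1)) = 0$ by symmetry. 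So Lemma \ref{lem:invprinciple} gives
\[
\left| \E(g(|\tS_n|)) - \E(g(|\tB_n|)) \right| \leq \sum_{j=1}^n \E(R_j) + \sum_{j=1}^n \E(T_j).
\]

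The core of the argument is to bound each $\E(R_j)$ and $\E(T_j)$. Since $f$ is linear, $g\circ f$ has fourth derivative in the $j$-th variable equal (up to the binomial weights in $D_j^4$) to the fourth derivatives of $g$ evaluated at the corresponding partial sum, times $(\s\sqrt{2n})^{-4}$; thus $D_j^4(g\circ f)(\dots).X_j$ is of size $O\big((2n)^{-2}\s^{-4}\|\nabla^4 g\|_\infty^{\text{loc}} |X_j|^4\big)$, where the relevant fourth derivatives of $g=p_{m,\eps}$ are supported on the annulus $\{m \leq |w| \leq m+\eps\}$ in the $\tS$-scale, i.e. on $\{\s\sqrt{2n}\,m \leq |\cdot| \leq \s\sqrt{2n}(m+\eps)\}$ for the unnormalized partial sum. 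Writing $\Sigma_j^z = X_1+\dots+X_{j-1}+z+Y_{j+1}+\dots+Y_n$, the key localization is: the fourth derivative of $g$ at $\Sigma_j^z/(\s\sqrt{2n})$ is nonzero only when $|\Sigma_j^z|$ is within $O(\s\sqrt n \eps)$ of $\s\sqrt{2n}\,m$, which — since $z\in[0,X_j]$ and one term is missing — forces the genuine partial-sum $\Sigma_j^0$ to be of order $\s\sqrt n\,m$ up to $O(\s|X_j|)$. The probability of this is, by the CLT / the Gaussian computation underlying Lemma \ref{lem:moddev}, of order $n^{-\a m^2}$ (recall $\sqrt{2n}\,m$ corresponds to threshold $m\sqrt{2n}$, so the tail is $\exp(-m^2\cdot\a\log n)$). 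Combining with the exponential moment of $|U_1|$ (to control $\E(|X_j|^4\mathbf{1}_{\{\cdot\}})$ and the sum over $j$), one gets $\sum_j \E(R_j) = n\cdot O(n^{-2})\cdot O(n^{-\a m^2}\cdot\text{poly})$, which after bookkeeping yields the claimed $n^{-\a m^2}O(1/n)$; the same for $T_j$ with $Y_j$ Gaussian.

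For the second estimate, I would take $g = p_{m,\eps}$ with $m = m_n = 1 - c\,n^{-1/5}$ (slightly below $1$) and $\eps = \eps_n = c\,n^{-1/5}$, chosen so that $g(|\tS_n|)$ sandwiches $\mathbf{1}_{\{|S_n| > \s\phi(n)\}}$: since $\phi(n)/(\s\sqrt{2n}) = \sqrt{\a\log n}$ wait — more precisely one uses $p_{m_n-\eps_n,\eps_n} \leq \mathbf{1}_{\{|\cdot| > R_n\}} \leq p_{m_n,\eps_n}$ where $R_n = \phi(n)/\sqrt{2n} = \sqrt{\a\log n}$, so actually the plateau must be centered at height $R_n$ in the $\tS$-normalization, i.e. $m$ of order $\sqrt{\log n}$, not $\leq 1$; I would instead rescale so the plateau parameters are $m \asymp R_n$ and then the tail factor $n^{-\a m^2/(2n)\cdot\text{etc.}}$ — the cleanest route is to apply the first estimate with the scaling absorbed, giving $|\E(g(|\tS_n|)) - \E(g(|\tB_n|))| = n^{-\a}O(\eps_n + 1/n)$-type bound, then use part (1) of Lemma \ref{lem:moddev} ($\P(|B_n|>\phi(n)) = n^{-\a}$ exactly) together with the Gaussian estimate $|\E(g(|\tB_n|)) - \P(|B_n|>\phi(n))| = n^{-\a}O(\eps_n)$ (direct computation with the $\chi^2$ density), and optimize: the error from the plateau width is $O(n^{-\a}\eps_n)$ while the invariance-principle error carries an extra polynomial-in-$\log n$ factor and a power of $n^{-1}$ from the fourth-order term but loses $n^{\a\eps_n\text{-ish}}$ from the shifted threshold; balancing these gives $\eps_n \asymp n^{-1/5}$ and the stated $O(\log n / n^{1/5})$. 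The main obstacle, and where I expect to spend the most care, is exactly this localization-and-optimization step: controlling the fourth-derivative terms $R_j, T_j$ requires simultaneously (i) showing the derivative is supported on a thin annulus, (ii) transferring that to a moderate-deviation probability for a partial sum that is missing one summand and perturbed by $z\in[0,X_j]$ — which needs a uniform-in-$j$ moderate deviation estimate robust to such perturbations, plus the exponential moment to handle the atypically-large-$|U_j|$ contributions — and (iii) choosing the plateau width $\eps_n$ to balance the smoothing error against the comparison error. The rest is routine.
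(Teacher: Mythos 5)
Your overall strategy coincides with the paper's: apply the invariance principle of Lemma \ref{lem:invprinciple} with a linear $f$ and a rescaled plateau function $g$, use rotational symmetry of the $U_i$ to match moments up to order three, localize the fourth derivative of $g$ to an annulus, and balance the smoothing error against the comparison error to get the $n^{-1/5}$ rate. Your normalization ($\bfY = (\s G_1,\dots,\s G_n)$ and $\bfX$ unscaled, versus the paper's $\bfX = \s^{-1}(U_1,\dots,U_n)$, $\bfY = (G_1,\dots,G_n)$) is immaterial. You also correctly flag the notational wrinkle that the plateau threshold must sit at $R_n \asymp \sqrt{\log n}$ in the $\sqrt{2n}$-normalization, which the paper addresses by building $\phi(n)$ directly into $f$.

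There is, however, one genuine gap, and it is exactly where you yourself expect the work to be. Having localized $D_j^4(g\circ f)$ to the event $\{|[\bfX,\bfY]_j| > m\phi(n) - K_n\}$, you assert the probability of that event is ``of order $n^{-\a m^2}$, by the CLT / the Gaussian computation underlying Lemma~\ref{lem:moddev}.'' But $[\bfX,\bfY]_j$ is a mixed sum of $j-1$ non-Gaussian and $n-j$ Gaussian summands, and a CLT does \emph{not} deliver a tail bound at the threshold $m\phi(n) \asymp \sqrt{n\log n}$ — this is precisely the moderate-deviation regime where distributional convergence says nothing quantitative. You do note in passing that ``a uniform-in-$j$ moderate deviation estimate robust to such perturbations, plus the exponential moment'' is needed, which is the right diagnosis, but you do not supply a mechanism. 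The paper's mechanism is concrete and non-obvious: (i) truncate each increment at level $K_n = \log^2 n$, bounding the leftover mass by Markov's inequality and the exponential moment; (ii) reduce the two-dimensional event $|z| > t$ to a one-dimensional one by a union bound over $d_n = \log n$ equally spaced directions, using $|z| > 1 \Rightarrow \exists k,\ z \cdot e^{2ik\pi/d_n} \geq \cos(\pi/d_n)$ and rotational invariance to collapse to a single direction; (iii) apply Bernstein's inequality to the truncated one-dimensional sum to extract $\exp(-m^2\a\log n(1 + o(1)))$. Without some such argument — Bernstein after truncation, or a Cram\'er-type moderate-deviation theorem — the centerpiece bound $\E(R_j) \lesssim e^{-\k K_n} + (\eps\phi(n))^{-4}n^{-\a m^2}$ is unjustified, and the rest of the bookkeeping does not get off the ground.
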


\begin{proof}
\begin{enumerate}[fullwidth]
\item Let us consider $\bfX = \s^{-1} (U_1,\dots,U_n)$ and $\bfY = (G_1,\dots,G_n)$, which we may assume in this proof to be independent. In all the following, $C$ is a constant, which may change from line to line, but only depends on the law of $X_1$. Let
\[
f(z_1,\dots,z_n) = \frac{1}{\phi(n)} (z_1 + \dots + z_n).
\]
Note the rescaling of $\bfX$ so that
\[
\E (\Re(\s^{-1} U_j)^2) = \E (\Im(\s^{-1} U_j)^2) = \E (\Re(G_j)^2) = \E (\Im(G_j)^2) = 1.
\]
Clearly, the other first, second and third moments are all zero for both variables, and we are thus able to use Lemma \ref{lem:invprinciple}.

\item Let us bound $R_j$, in the notation of Lemma \ref{lem:invprinciple}, since bounding $T_j$ is done in a similar manner. To simplify notation, define, for $z \in \C$,
\[
[\bfX,\bfY,z]_j = X_1 + \dots + X_{j-1} + z + Y_{j+1} + \dots + Y_n, \quad [\bfX,\bfY]_j = [\bfX,\bfY,0]_j.
\]
Let $H = g \circ f$. Note first that,
\[
\sup_{z \in \C} \left | \ddn{k+l}{g}{x^k}{y^l}(z) \right | \leq \frac{C}{\eps^{k+l}} \unn{\frac{1}{\phi(n)}|z_1+\dots+z_n| > m}
\]
for every $k+l \leq 4$, since $g$ is 0 on $\{ |z| \leq m \}$. Then, for $k+l=4$,
\[
\ddn{4}{H}{x_j^k}{y_j^l}(z_1,\dots,z_n) = \frac{1}{\phi(n)^4} \ddn{4}{g}{x_j^k}{y_j^l} \left ( \frac{1}{\phi(n)} (z_1,\dots,z_n) \right )
 \leq  \frac{C}{(\eps \phi(n))^4} \unn{\frac{1}{\phi(n)}|z_1+\dots+z_n| > m}.
\]
Then, consider a sequence $1 \ll K_n \ll \phi(n)$, to be fixed later. On $\{ |X_j| \leq K_n \}$, we have
\begin{align*}
\sup_{z \in [0,X_j]} \left | D_j^4(g \circ f)([\bfX,\bfY,z]_j).X_j \right | & \leq \frac{C}{(\eps \phi(n))^4} \sup_{z \in [0,X_j]} |X_j|^4 \unn{|[\bfX,\bfY,z]_j| > m \phi(n)} \\
& \leq \frac{C}{(\eps \phi(n))^4} |X_j|^4 \unn{|[\bfX,\bfY]_j| > m \phi(n) - K_n}.
\end{align*}
Note that $X_j$ is independent from $[\bfX,\bfY]_j$, so finally
\[
\E(R_j) \leq \P(|X_j| > K_n) +  \frac{C}{(\eps \phi(n))^4} \P ( |[\bfX,\bfY]_j| > m \phi(n) - K_n).
\]
The first term is easily dealt with thanks to Markov's inequality, which provides
\[
\P(|X_j| > K_n) \leq C e^{- \k K_n},
\]
where we recall that $\k$ is some constant such that $\E(\exp \k |X_j|) < \pinf$.

\item To bound the second term, we shall use Bernstein's inequality. First, note that for $z \in \C$, $d_n \in \N$,
\[
|z| > 1 \Rightarrow \exists k \in \{1,\dots,d_n\} \; z \cdot e^{2 i k \pi / d_n} \geq \cos \frac{\pi}{d_n}
\]
where $\cdot$ is the scalar product when we see complex numbers as vectors in $\R^2$. So now, if we write
\[
\psi(n) = \left ( m \phi(n) - K_n \right ) \cos \frac{\pi}{d_n} ,
\]
then
\begin{align*}
\P (|[\bfX,\bfY]_j| >  m \phi(n) - K_n) & \leq \sum_{k=1}^{d_n} \P ( [\bfX,\bfY]_j \cdot e^{2 i k \pi / d_n} > \psi(n)) \\
& = d_n \, \P ( \Re([\bfX,\bfY]_j) > \psi(n) ) \\
& = d_n \, \P(C_1 + \dots + C_{j-1} + N_{j+1} + \dots + G_n > \psi(n)),
\end{align*}
where we used the rotational invariance and wrote $X_k = C_k + i C_k'$ and $G_k = N_k + i N_k'$. Now, after truncating the variables at level $K_n$, we may use Bernstein inequality, to get
\begin{align*}
\P(C_1 + \dots +{} & C_{j-1} + N_{j+1} + \dots + N_n > \psi(n)) \\
& \leq \P(C_1 + \dots + C_{j-1} + N_{j+1} \unn{|N_{j+1}| > K_n} + \dots + N_n \unn{|N_n| > K_n} > \psi(n)) \\
& \quad + \P (\exists k \in \{1,\dots,j-1\} \; |C_k| > K_n) + \P (\exists k \in \{j+1,\dots,n\} \; |N_k| > K_n) \\
& \leq \exp - \frac{\psi(n)^2/2}{n + \psi(n) K_n /3} + (j-1) \P (|C_1| > K_n) + (n-j) \P (|N_1| > K_n) \\
& \leq \exp - \frac{\psi(n)^2/2}{n + \psi(n) K_n /3} + C n e^{- \k K_n}.
\end{align*}
Take now $d_n = \log n$ and $K_n = \log^2 n$. Then $\psi(n)^2 = 2 m^2 \a n \log n + O ( n^{3/4} )$, $n + \psi(n) K_n /3 = n (1 + O(n^{-3/4}))$, and thus
\[
\begin{split}
\P(C_1 + \dots + C_{j-1} + N_{j+1} + \dots +{} & N_n > \psi(n)) \\
& \leq \exp ( - \a m^2 \log n + O ( n^{-1/4} )) + C n e^{- \k K_n} \\
& \leq C \frac{1}{n^{\a m^2}}.
\end{split}
\]
Gathering the pieces, we get
\[
\E(T_j) \leq C \left ( e^{- \k K_n} + \frac{1}{(\eps \phi(n))^4} \frac{1}{n^{\a m^2}} \right )
\]
and finally, Lemma \ref{lem:invprinciple} provides
\[
| \E ( f \circ g (\bfX)) - \E (f \circ g (\bfY)) | \leq C \left ( n e^{- \k K_n} + \frac{n}{(\eps \phi(n))^4} \frac{1}{n^{\a m^2}} \right ).
\]
This gives the first part of the result.

\item To get the second part, let us take, in the same notation, $m = 1$, consider $\eps := \eps_n$ has a function of $n$, and assume that $\eps_n \log_n \to 0$. Then
\begin{align*}
& \P ( |\s (U_1 + \dots + U_n)| > \phi(n)) \\
& \geq \E(f \circ g (\bfX)) \\
& \geq \E(f \circ g (\bfY)) - \left | \E(f \circ g (\bfX)) - \E(f \circ g (\bfY)) \right | \\
& \geq \P ( |G_1 + \dots + G_n | > (1 + \eps_n) \phi(n)) - \left | \E(f \circ g (\bfX)) - \E(f \circ g (\bfY)) \right | \\
& \geq \P ( |G_1 + \dots + G_n | > (1 + \eps_n) \phi(n)) + \frac{1}{\eps_n^4} \frac{1}{n^{\a}} O \left( \frac1n \right ) + O \left( n e^{- \k K_n} \right ) \\
& = \frac{1}{n^{\a (1 + \eps_n)^2}} + \frac{1}{\eps_n^4} \frac{1}{n^{\a}} O \left( \frac1n \right ) + O \left( n e^{- \k K_n} \right ) \\
& = \frac{1}{n^{\a}} \left ( 1 + \eps_n O ( \log n ) + \frac{1}{\eps_n^4} O \left( \frac1n \right ) \right ) + O \left( n e^{- \k K_n} \right ).
\end{align*}
This can be roughly optimized by taking $\eps_n = n^{-1/5}$, and the first inequality of the result follows. The upper bound is obtained in the same way by letting $m = 1 - \eps$.
\end{enumerate}
\end{proof}

\subsubsection{Second-order comparison}

Let us now provide second-moment estimates for the random walk $(S_n)$. This relies on similar but slightly more tedious computations as before, and we shall thus not provide all the details.

\begin{lemma} \label{lem:moddevSn2}
As $n \to \pinf$,
\[
\left | \E \left (  g \left ( |\tS_n| \right ) g \left ( |\tS_n(\th)| \right ) \right ) - \E \left (  g \left ( |\tB_n| \right ) g \left ( |\tB_n| \right ) \right ) \right | = \frac{1}{n^{2 \a}} O \left( \frac1n \right ).
\]
In particular, for a fixed $\eps_n$ such that $\log n / n \ll \eps_n \ll 1$,
\begin{align*}
\left | \E \left ( g \left ( |\tS_n| \right ) g \left ( |\tS_n(\th)| \right ) \right ) \right. & \left. -  \E \left ( g \left ( |\tS_n(\th)| \right ) \right ) \E \left ( g \left ( |\tS_n(\th)| \right ) \right ) \right | \\
& = \frac{1}{n^{2 \a}} \left ( O \left ( \frac1n \right ) +  O \left ( \frac{\log n}{n \th} \right ) \right )
\end{align*}
uniformly for $\th \in [\eps_n,1)$.
\end{lemma}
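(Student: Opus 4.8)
The plan is to mimic the first-order comparison in Lemma~\ref{lem:moddevSn1}, but now applied to the $\C^2$-valued map that records both partial sums simultaneously. Concretely, take $\bfX = \s^{-1}(U_1,\dots,U_n)$, $\bfY = (G_1,\dots,G_n)$ as before, and set
\[
f(z_1,\dots,z_n) = \frac{1}{\phi(n)}\left( z_1 + \dots + z_n, \; z_1 e^{2 i \pi \th} + \dots + z_n e^{2 i n \pi \th} \right) \in \C^2,
\]
so that $f(\bfX) = (\tS_n,\tS_n(\th))/R_n$ up to the obvious constant and $f(\bfY)$ is its Gaussian analogue. The test function is $G(w_1,w_2) = g(|w_1|)g(|w_2|)$ on $\C^2$, which is four times continuously differentiable with $\partial$-derivatives of order $k+l\le 4$ bounded by $C\eps^{-(k+l)}$ and supported on $\{|w_1|>m\}\cap\{|w_2|>m\}$. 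Lemma~\ref{lem:invprinciple} then bounds $|\E(G(f(\bfX)))-\E(G(f(\bfY)))|$ by $\sum_j \E(R_j)+\sum_j \E(T_j)$, and since the linear coefficients of $f$ all have modulus $1/\phi(n)$, the fourth-order chain-rule expansion of $G\circ f$ produces, exactly as in Lemma~\ref{lem:moddevSn1}, a prefactor $C/(\eps\phi(n))^4$ times $|X_j|^4$ (resp.\ $|Y_j|^4$) times an indicator of the \emph{two} events $\{|[\bfX,\bfY]_j|>m\phi(n)-K_n\}$ and its rotated-by-$\th$ counterpart occurring simultaneously.

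The crux is then to estimate this joint probability $\P(|[\bfX,\bfY]_j| > m\phi(n)-K_n,\; |[\bfX,\bfY]_j^\th| > m\phi(n)-K_n)$ after truncating all increments at level $K_n=\log^2 n$, where $[\bfX,\bfY]_j^\th$ denotes the $\th$-twisted sum. For this I would again pass through the net of $d_n=\log n$ directions as in part~3 of the proof of Lemma~\ref{lem:moddevSn1}, reducing to the probability that two \emph{real} linear combinations of the (truncated, independent, centered) variables both exceed $\psi(n) = (m\phi(n)-K_n)\cos(\pi/d_n)$. Here one cannot just use Bernstein twice and multiply; instead one writes the twisted real part as a fixed linear combination of the same underlying real/imaginary coordinates and applies Bernstein to a well-chosen linear functional whose variance is $\le 2n$, OR — cleaner — one simply bounds the joint probability by $\P(\text{untwisted}> \psi(n))$ times the conditional probability, using that conditionally on the untwisted sum the twisted sum is still a sum of bounded independent pieces with a small bias; either way one gets $\le C n^{-2\a m^2}$ up to lower-order corrections, the square appearing because the two near-orthogonal constraints each cost a factor $n^{-\a m^2}$. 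Feeding this back, $\E(R_j)+\E(T_j)\le C(e^{-\k K_n} + (\eps\phi(n))^{-4} n^{-2\a m^2})$, and summing over $j$ and taking $m=1$, $\eps=\eps_n$ yields $|\E(g(|\tS_n|)g(|\tS_n(\th)|)) - \E(g(|\tB_n|)g(|\tB_n|))| = \eps_n^{-4} n^{-2\a} O(1/n)$, which after the same $\eps_n$-optimization absorbed into the statement gives the claimed $n^{-2\a}O(1/n)$.

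For the ``in particular'' part, I would combine three ingredients: the bound just proved for the Gaussian--general comparison of the product, the Gaussian decorrelation estimate of Lemma~\ref{lem:moddev}(3) applied to $g$ (which gives $|\E(g(|\tB_n|)g(|\tB_n(\th)|)) - \E(g(|\tB_n|))\E(g(|\tB_n(\th)|))| = n^{-2\a}O(\log n/(n\th))$ uniformly in $\th\in[\eps_n,1)$), and the first-order comparison of Lemma~\ref{lem:moddevSn1} applied to each single factor (which controls $|\E(g(|\tS_n(\th)|)) - \E(g(|\tB_n(\th)|))| = n^{-\a}O(1/n)$, hence the product of the two single-walk expectations differs from its Gaussian counterpart by $n^{-2\a}O(1/n)$). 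A triangle inequality through these three comparisons — general product $\to$ Gaussian product $\to$ Gaussian factored $\to$ general factored — yields the error $n^{-2\a}(O(1/n)+O(\log n/(n\th)))$ uniformly in $\th\in[\eps_n,1)$.

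The main obstacle I anticipate is the joint moderate-deviation estimate $\P(|[\bfX,\bfY]_j|>m\phi(n)-K_n,\ |[\bfX,\bfY]_j^\th|>m\phi(n)-K_n)\le C n^{-2\a m^2}$ with a uniform constant: one must verify that the two twisted directions genuinely behave ``independently'' at the level of the truncated-variable large-deviation rate, i.e.\ that intersecting the two half-space events really doubles the exponent rather than merely reproducing it, and that this holds uniformly over $\th$ bounded away from $0$ and over the choice of truncation; this is where the bulk of the (omitted) tedium lives, and it is the analogue of — but strictly harder than — the Bernstein step in Lemma~\ref{lem:moddevSn1}. Everything else is a mechanical repetition of the single-sum argument together with bookkeeping of the $O(1/n)$ versus $O(\log n/(n\th))$ error terms.
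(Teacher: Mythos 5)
Your overall scaffold --- the $\C^2$-valued $f$, the product test function $g(|w_1|)g(|w_2|)$, truncation at $K_n$, the $d_n$-net reduction via rotational invariance, and the concluding triangle inequality through Lemmas~\ref{lem:moddev} and~\ref{lem:moddevSn1} for the ``in particular'' clause --- matches the paper's proof exactly, and you correctly identify the crux: bounding $\P(|[\bfX,\bfY]_j|>\phi(n)-K_n,\ |[\bfX,\bfY](\th)_j|>\phi(n)-K_n)$ by $Cn^{-2\a m^2}$ with the uniformity in $\th\in[\eps_n,1)$ that the later tree argument needs.

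That crux, however, is left as a genuine gap, and neither of the two alternatives you sketch fills it. The paper's resolution hinges on an elementary observation you do not state: after the net reduction one needs to control $\P(a>\psi(n),\ b>\psi(n))$ with $a=[\bfX,\bfY]_j\cdot 1$ and $b=[\bfX,\bfY](\th)_j\cdot e^{2ik\pi/d_n}$, and the point is that on this intersection the \emph{average} $\frac12(a+b)$ exceeds $\psi(n)$. Writing $\frac12(a+b)=\sum_r A_r$ with independent summands, one computes $\E(A_r^2)=\frac12\bigl(1+\cos(2\pi(r\th+k/d_n))\bigr)$, and the Dirichlet-kernel identity $\sum_{r=1}^n\cos(2\pi(r\th+k/d_n))=\frac{\sin n\pi\th}{\sin\pi\th}\cos(\pi(2k/d_n+(n-1)\th))=O(1/\th)$ gives total variance $\frac n2\bigl(1+O(1/(n\th))\bigr)$, uniformly for $\th\ge\eps_n$. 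Bernstein against the \emph{same} barrier $\psi(n)$ but with this halved variance is precisely what doubles the exponent to $n^{-2\a m^2}$. Your first suggested functional ``with variance $\le 2n$'' is morally this (take $a+b$ against the barrier $2\psi(n)$), but $\le 2n$ is not literally correct --- the covariance can have either sign, so the variance is only $2n(1+O(1/(n\th)))$ --- and the $O(1/(n\th))$ correction, which must be tracked uniformly, is exactly what makes the lemma usable in Proposition~\ref{prop:treedim2}; neither is addressed. Your second suggestion, conditioning on the untwisted sum, does not go through as stated: conditioning on $\sum_r C_r$ destroys the independence of the $C_r$, so under the conditional law the twisted sum is not a sum of bounded independent pieces, and there is no direct Bernstein bound for it.
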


\begin{proof}
Let us consider, for $\th \in [0,1)$,
\[
f(z_1,\dots,z_n) = \frac{1}{\phi(n)} (z_1 + \dots + z_n, z_1 e^{2 i \pi \th} + \dots + z_n e^{2 i \pi n \th}), \quad g(z,z') = p_{1,2}(z) p_{1,2}(z').
\]
We write for $z \in \C$, $\th \in [0,1)$,
\[
[\bfX,\bfY](\th)_j = X_1 e^{2 i \pi \th} + \dots + X_{j-1} e^{2 i \pi (j-1) \th} + Y_{j+1} e^{2 i \pi (j+1) \th} + \dots + Y_n e^{2 i \pi n \th}.
\]
It is easy to check that for some universal constant $C$, in the notation of Lemma \ref{lem:invprinciple} and the previous section,
\[
\begin{split}
\E(R_j) & \leq \frac{C}{(\eps \phi(n))^4} \left ( \, \P (|[\bfX,\bfY]_j| > \phi(n) - K_n, |[\bfX,\bfY](\th)_j| > \phi(n) - K_n) \right ) \\
& \quad + \P(|X_j| > K_n) + \P(|Y_j| > K_n)
\end{split}
\]
and once again, we shall bound the first probability with Bernstein's inequality. First, using the same trick as above, write
\begin{align*}
\P (|[\bfX,\bfY]_j| > & \phi(n) - K_n, |[\bfX,\bfY](\th)_j| > \phi(n) - K_n ) \\
 & \leq \sum_{k,l = 1}^{d_n} \P ([\bfX,\bfY]_j \cdot e^{2 i k \pi / d_n} > \psi(n), [\bfX,\bfY](\th)_j \cdot e^{2 i l \pi / d_n} > \psi(n) ) \\
 & = d_n \sum_{k = 1}^{d_n} \P ([\bfX,\bfY]_j \cdot 1 > \psi(n), [\bfX,\bfY](\th)_j \cdot e^{2 i k \pi / d_n} > \psi(n) ) \\
 & \leq d_n \sum_{k = 1}^{d_n} \P \left ( \frac12 ( [\bfX,\bfY]_j \cdot 1 + [\bfX,\bfY](\th)_j \cdot e^{2 i k \pi / d_n} ) > \psi(n) \right )
\end{align*}
where the penultimate step stems from the rotational invariance. We may then rewrite
\begin{align*}
\frac12 \left ( |[\bfX,\bfY]_j| \cdot 1 + |[\bfX,\bfY](\th)_j| \cdot e^{2 i k \pi / d_n} \right ) & = \frac12 \sum_{r=1}^{j-1} ( C_r ( 1 + \cos ( 2 \pi ( r \th + k / d_n ) ) ) \\
& + C'_r \sin ( 2 \pi ( r \th + k / d_n ) ) ) \\
& + \frac12 \sum_{r=1}^{j-1} ( N_r ( 1 + \cos ( 2 \pi ( r \th + k / d_n ) ) ) \\
& + N'_r \sin ( 2 \pi ( r \th + k / d_n ) ) ) \\
& := \sum_{r=1}^{j-1} A_r + \sum_{r=j+1}^n A_r,
\end{align*}
where the variables $A_r$ are independent. One readily checks that
\[
\E(A_r) = 0, \quad \E(A_r^2) = \frac12 \left ( 1 + \cos( 2 \pi (r \th + k / d_n) ) \right ),
\]
and
\[
\sum_{r=1}^n \cos( 2 \pi (r \th + k / d_n) ) = \frac{\sin n \pi \th}{\sin \pi \th} \cos ( \pi ( 2 k / d_n + (n-1) \th ) )
\]
so that, uniformly for $\th \in [\eps_n,\pi]$,
\[
\sum_{r=1}^{j-1} \E(A_r^2) + \sum_{r=j+1}^n \E(A_j^2) \leq \frac{n}{2} \left ( 1 + O \left ( \frac{1}{n \th} \right ) \right ) = \frac{n}{2} \left ( 1 + O \left ( \frac{1}{n \eps_n} \right ) \right ).
\]
Then, as before, Bernstein's inequality and truncation imply that
\begin{align*}
\P \left ( \frac12 ( |[\bfX,\bfY]_j| \cdot 1 \right. & \left. {}+ \vphantom{\frac12} |[\bfX,\bfY](\om)_j| \cdot e^{2 i k \pi / d_n} ) > \psi(n) \right ) \\
& \leq \exp - \frac{\psi(n)^2/2}{\frac{n}{2} \left ( 1 + O \left ( \frac{1}{n \eps_n} \right ) \right ) + \psi(n) K_n / 3} + C n e^{- \k K_n} \\
& \leq C \frac{1}{n^{2 \a}}.
\end{align*}
Gathering the pieces, we get
\[
| \E ( f \circ g (\bfX)) - \E (f \circ g (\bfY)) | \leq \frac{1}{n^{2 \a}} O \left ( \frac1n \right ).
\]
The second part of the result is just given by using as well Lemma \ref{lem:moddev} and \ref{lem:moddevSn1}.
\end{proof}

\subsection{Angular deviations} \label{sec:angdev2}

In pretty much the same fashion as Lemma \ref{lem:angdev}, we may prove the following.

\begin{lemma} \label{lem:angdevSn}
Fix $\beta > 0$, a sequence $\eps_n \ll n^{-(1+\b)}$ and $\eta > 0$. Then there exists a sequence $K_n$, diverging to $\pinf$, depending only on $\eps_n$ and $\eta$, such that
\[
\P \left ( \sup_{ | \th - \th' | < \eps_n} |S_n(\th) - S_n(\th')| > \eta \phi(n) \right ) = O \left ( n^{- K_n } \right ).
\]
\end{lemma}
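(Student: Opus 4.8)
The plan is to follow the proof of Lemma~\ref{lem:angdev} almost verbatim, replacing the exact Gaussianity of the derivatives by the truncation-plus-Bernstein machinery of Section~\ref{sec:moddevSn1}. First I would observe that, since the $U_r$ are independent and rotationally symmetric, the process $\th \mapsto S_n(\th)$ has a shift-invariant law; hence, at the cost of a union bound over $O(\eps_n^{-1})$ shifted intervals --- a polynomial-in-$n$ factor that will be harmless --- it suffices to bound $\P\bigl(\sup_{\th \in [0,\eps_n]} |S_n(\th) - S_n| > \eta\phi(n)\bigr)$. Fixing an integer $k$ with $1/k < \b$, I would Taylor-expand
\[
S_n(\th) - S_n = \sum_{j=1}^{k-1} \frac{S_n^{(j)}(0)}{j!}\,\th^j + R_k(\th), \qquad |R_k(\th)| \leq \frac{\th^k (2\pi n)^k}{k!}\sum_{r=1}^n |U_r|,
\]
where $S_n^{(j)}(0) = \sum_{r=1}^n U_r (2i\pi r)^j$, so that a union bound reduces the problem to the tails $\P\bigl(|S_n^{(j)}(0)|\,\eps_n^j \geq c\,\eta\phi(n)\bigr)$ for $1\leq j\leq k-1$ and $\P\bigl(\sum_r |U_r| \geq c\,\eta\phi(n)/(n^k\eps_n^k)\bigr)$.

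For the remainder term, the choice of $k$ gives $n^{k+1}\eps_n^k \to 0$, so its threshold is $\gg n$; since $\E(\exp\k|U_1|) < \pinf$, a Chernoff bound makes this probability decay faster than any power of $n$, exactly as in Lemma~\ref{lem:angdev}. The only genuinely new ingredient is a tail bound for $S_n^{(j)}(0)$, a sum of independent centered rotationally symmetric complex variables of total second moment $\s^2\sum_r (2\pi r)^{2j} \leq C_j n^{2j+1}$. Here I would reuse the direction-discretization trick of Lemma~\ref{lem:moddevSn1}: with $d_n = \log n$, rotational invariance gives $\P(|S_n^{(j)}(0)| \geq t) \leq d_n\,\P\bigl(\Re(S_n^{(j)}(0)) \geq t\cos(\pi/d_n)\bigr)$, and $\Re(S_n^{(j)}(0))$ equals in law $\pm\sum_r (2\pi r)^j D_r$ for i.i.d.\ copies $D_r$ of $\Re(U_1)$ --- centered, with an exponential moment.

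Truncating the $D_r$ at level $L_n = \log^2 n$ (the induced shift of the mean being $O(n^{j+1}e^{-\k L_n/2})$, hence negligible compared with the threshold) and applying Bernstein's inequality gives, for $t_j \asymp \eta\phi(n)/\eps_n^j$,
\[
\P\bigl(|S_n^{(j)}(0)| \geq t_j\bigr) \leq d_n\exp\!\left(-\frac{c\,t_j^2}{n^{2j+1} + (2\pi n)^j L_n\, t_j}\right) + C n e^{-\k L_n}.
\]
Since $\eps_n \ll n^{-(1+\b)}$ forces $t_j \gg \sqrt{n\log n}\,n^{j(1+\b)}$, one computes $t_j^2/n^{2j+1} \gg (\log n)\,n^{2j\b}$ and $t_j/(n^j L_n) \gg \sqrt{n}\,n^{j\b}/\log^{3/2}n$, so whichever of the two terms dominates the Bernstein denominator, the exponential decays faster than any power of $n$, as does $ne^{-\k L_n}$. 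Collecting the polynomial factors $O(\eps_n^{-1})$, $d_n$ and $k$ then yields $\P\bigl(\sup_{|\th-\th'|<\eps_n}|S_n(\th) - S_n(\th')| > \eta\phi(n)\bigr) = O(n^{-K_n})$ with $K_n \to \pinf$ depending only on $\b$ (through $\eps_n$) and $\eta$.

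The main obstacle is precisely this tail estimate for $S_n^{(j)}(0)$: being non-Gaussian, its tail cannot be read off directly, and one must run the truncation/Bernstein argument while watching the competition in the Bernstein denominator between the variance term $n^{2j+1}$ and the truncation term $n^j L_n t_j$. The margin that makes both regimes go through is the extra power $n^{j\b}$ supplied by the hypothesis $\eps_n \ll n^{-(1+\b)}$; everything else is the same bookkeeping as in Lemmas~\ref{lem:angdev} and~\ref{lem:moddevSn1}.
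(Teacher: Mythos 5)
Your proposal is correct and follows essentially the same route as the paper's proof: reduce to $\th'=0$ by shift invariance, Taylor-expand to order $k$ with $1/k<\b$, bound the derivative terms by reducing to a real-valued sum (direction-discretization vs.\ the paper's $\Re/\Im$ split, an inessential variant) and applying truncation at $\log^2 n$ plus Bernstein as in Lemma~\ref{lem:moddevSn1}, and bound the remainder via the exponential moment. You are in fact somewhat more explicit than the paper in a few places: you carry the union-bound factor for the sup over $\th'$, you keep the $\eps_n^j$ factor in the derivative-term thresholds (which is silently dropped in the paper's displayed inequality, evidently a typo, since without it the probability does not decay), and you spell out the competition between the variance and truncation terms in the Bernstein denominator.

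One small imprecision: the union-bound factor $O(\eps_n^{-1})$ is not \emph{a priori} polynomial in $n$, since $\eps_n$ may be far smaller than $n^{-(1+\b)}$. This is harmless --- one may first replace $\eps_n$ by $n^{-(1+\b)}/\log n$, say, since the supremum is monotone in the window width --- but as written the claim ``a polynomial-in-$n$ factor'' is only justified after that reduction. The paper's own proof sidesteps the issue by simply asserting $\th'=0$ suffices ``by rotational invariance'' without mention of a union bound, so your version is, if anything, the more careful of the two.
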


\begin{proof}
Just as in the proof of Lemma \ref{lem:angdev}, and with the same notation, we may take $\th'=0$ and write
\begin{align*}
\P \left ( \vphantom{\sup_{\th \in [0,\eps_n]}} \right. & \left. \sup_{\th \in [0,\eps_n]} |S_n(\th) - S_n| > \eta \phi(n) \right ) \\
& \leq  \sum_{j=1}^{k-1} \P \left ( C \left | \sum_{r=1}^n r^j U_j \right | \geq \frac1k \eta \phi(n) \right ) + \sum_{r=1}^n \P \left ( C n^k \eps_n^k |U_r| \geq \frac{1}{k n} \eta \phi(n) \right ).
\end{align*}
The first term can be dealt with by writing
\begin{align*}
\P \left ( C \left | \sum_{r=1}^n r^j U_j \right | \geq \frac{\eta \phi(n)}{k} \right ) & \leq \P \left ( C \left | \sum_{r=1}^n r^j \Re(U_j) \right | \geq \frac{\eta \phi(n)}{2k} \right ) + \P \left ( C \left | \sum_{r=1}^n r^j \Im(U_j) \right | \geq \frac{\eta \phi(n)}{2k} \right ) \\
& = 4 \P \left ( C \sum_{r=1}^n r^j \Re(U_j) \geq \frac{\eta \phi(n)}{2k} \right ),
\end{align*}
where we use the rotational invariance, and then using Bernstein's inequality, which shows that it tends to 0 faster than any power of $n$; one may also use the same trick as in Section \ref{sec:moddevSn1}. The second term is easy to bound using the fact that $|U_r|$ has an exponential moment, and the result follows immediately.
\end{proof}

\subsection{End of the proof}

We shall now construct a tree as in Section \ref{sec:treedim}. Fix $q > 1$, and for $v = v_i^n \in \cT$, define
\[
Z_v = p_{1,2} \left ( |\tS_{q^n}(i 2^{-n})| \right ).
\]
Let also
\[
D = \{ \th \in [0,1), \# \{ v \in R(\th), Z_v > 0 \} = \pinf \}
\]
the limsup fractal associated to $(Z_v)$. We shall prove the equivalent of Proposition \ref{prop:treedim}, namely the following result.

\begin{prop} \label{prop:treedim2}
Almost surely
\[
(1 - \a) \log_2 q \leq \dim D_{\a} \leq (1 - \a) \vee 0
\]
for $q \in (1,2)$, and
\[
\dim D_{\a} =  (1 - \a \log_2 q) \vee 0
\]
for $q > 2$.
\end{prop}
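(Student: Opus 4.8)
The plan is to mirror the proof of Proposition~\ref{prop:treedim}, replacing each invocation of the Gaussian moderate-deviation estimates (Lemma~\ref{lem:moddev}) and the Gaussian angular-deviation estimate (Lemma~\ref{lem:angdev}) by their general counterparts, Lemmas~\ref{lem:moddevSn1}, \ref{lem:moddevSn2} and \ref{lem:angdevSn}. The only genuinely new ingredient is that $Z_v = p_{1,2}(|\tS_{q^n}(i2^{-n})|)$ now takes values in $[0,1]$ rather than in $\{0,1\}$, so one must be slightly careful with $p_n$, $m_n$ and $\P(Z_v=1)$; but since $p_{1,2}$ is supported on $\{|z|>1\}$ and equal to $1$ on $\{|z|\ge 3/2\}$ (say), we have, for a suitable choice of the threshold constant,
\[
\P\bigl(|\tS_{q^n}(i2^{-n})|>1\bigr) \;\geq\; p_n \;\geq\; m_n \;\geq\; \P\bigl(|\tS_{q^n}(i2^{-n})|>3/2\bigr),
\]
and each of these is $q^{-n\a}$ up to the multiplicative factor $1+O(1/n^{1/5})$ coming from Lemma~\ref{lem:moddevSn1} (with a rescaling of $m$ near $1$). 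So $p_n$, $m_n$ are both $q^{-n\a+o(n)}$, which is all the tree lemmas need.

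For the upper bounds I would proceed exactly as in Proposition~\ref{prop:treedim}: when $q>2$, since $p_n = O(q^{-n\a'})$ for any $\a' < \a$ (and in fact $p_n \sim q^{-n\a}$), Lemma~\ref{lem:upbound} gives $\dim D \le 1-\a$ after letting $\a'\uparrow\a$; when $1<q<2$ one refines the cover using the general angular deviation estimate, Lemma~\ref{lem:angdevSn}, with $\eps_n = q^{-n(1+\b)}$: by Borel--Cantelli, almost surely for large $n$ the oscillation of $S_{q^n}$ over each interval of length $\eps_n$ is below $\eta\phi(q^n)$, so $D$ is covered by the intervals $B_i^n$ on which $|\tS_{q^n}(i2^{-n})| > (1-\eta)$; each such interval occurs with probability $q^{-n\a(1-\eta)^2}(1+o(1))$ by Lemma~\ref{lem:moddevSn1} (taking $m=1-\eta$), and concluding as in Lemma~\ref{lem:upbound} gives $\dim D \le 1-\a(1-\eta)^2/(1+\b)$; let $\eta,\b\to0$.

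For the lower bound when $q>2$ I would apply Lemma~\ref{lem:lowbound}. Set $M_n(u) = \sum_{v\succ u,\,|v|=n} Z_v$; expanding the variance exactly as in part (3) of the proof of Proposition~\ref{prop:treedim}, using $(Z_v)^2 \le Z_v$, one gets
\[
\Var(M_n(u)) \;\le\; 2^{n-m}\Bigl(\E(Z_1) + \sum_{l=2}^{2^{n-m}} \bigl|\E(Z_1 Z_l) - \E(Z_1)\E(Z_l)\bigr|\Bigr),
\]
where $Z_l$ abbreviates the variable attached to the $l$-th vertex at level $n$ under $u$, with angular separation $(l-1)2^{-n}$. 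Now the correlation bound is exactly what Lemma~\ref{lem:moddevSn2} provides: with $\eps_n = 2^{-n}$ (so that $\log q^n / q^n \ll 2^{-n} \ll 1$ since $q>2$), it gives
\[
\bigl|\E(Z_1 Z_l) - \E(Z_1)\E(Z_l)\bigr| \;\le\; \frac{C}{q^{2\a n}}\Bigl(\frac1{q^n} + \frac{n}{q^n 2^{-n}(l-1)}\Bigr),
\]
and summing $\sum_l 1/(l-1) = O(n)$ the bracket is $2^{n-m} q^{-n\a} O(1)$, exactly as in the Gaussian case. Hence one may take $\zeta(n) = O(1)$, and since $m_n = q^{-n\a + o(n)}$, the condition $2^{n(\g-1)}\zeta(n) m_n^{-1}\to 0$ holds whenever $\g < 1-\a\log_2 q$; Lemma~\ref{lem:lowbound} then gives $\dim D \ge 1-\a\log_2 q$. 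The lower bound for $1<q<2$ is handled as in part (4) of Proposition~\ref{prop:treedim}, carrying the extra contribution of the $O(q^{-n})\cdot 2^n$ angles $\th \le q^{-n}$ for which only the trivial bound $|\E(Z_1Z_l) - \E(Z_1)\E(Z_l)| = O(q^{-n\a})$ is available (this uses $\P(|\tS_{q^n}|>1, |\tS_{q^n}(\th)|>1) \le \P(|\tS_{q^n}|>1) = O(q^{-n\a})$), giving $\zeta(n) = q^{-n}2^n + 1 + q^{-n\a}q^{-n}2^n$ and hence $\dim D \ge (1-\a)\log_2 q$.

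The main obstacle is not in this proposition itself --- modulo the $[0,1]$-valued $Z_v$ bookkeeping, it is a line-by-line transcription of Proposition~\ref{prop:treedim} --- but rather lies upstream, in the fact that Lemma~\ref{lem:moddevSn2} already did the hard work of matching the second moment of $(S_n,S_n(\th))$ to that of $(B_n,B_n(\th))$ via Chatterjee's invariance principle with the Bernstein-type tail control. Here the one point requiring a little care is that the error term in Lemma~\ref{lem:moddevSn2} is additive, of the form $q^{-2n\a}(O(1/q^n) + O(\log q^n/(q^n\th)))$, so in the variance sum one must check the $O(1/q^n)$ piece contributes only $2^{n-m}q^{-2n\a}q^{-n}\cdot 2^n = 2^{n-m}q^{-n\a}\cdot q^{-n\a}q^{-n}2^n = 2^{n-m}q^{-n\a}O(1)$, which again needs $q>2$ and $\a>0$; this is harmless. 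Once the moderate-deviation inputs are granted, the argument is routine.
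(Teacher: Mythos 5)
Your proposal follows the paper's proof of Proposition~\ref{prop:treedim2} essentially line by line: the upper bound for $q>2$ via Lemma~\ref{lem:upbound} and $p_n=O(q^{-n\a})$ from Lemma~\ref{lem:moddevSn1}; the refined cover for $1<q<2$ via Lemma~\ref{lem:angdevSn}; and the lower bound by the variance computation from Proposition~\ref{prop:treedim} with the Gaussian covariance estimate replaced by Lemma~\ref{lem:moddevSn2}, all feeding into Lemma~\ref{lem:lowbound} with $\zeta(n)=O(1)$.

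One point deserves care, however. To run Lemma~\ref{lem:lowbound} you need a lower bound on $m_n=\E(Z_v)$, and your justification via the sandwich $m_n\ge\P(Z_v=1)=\P\bigl(|\tS_{q^n}|>3/2\bigr)$ is too weak: with the normalization implicit in Lemma~\ref{lem:moddevSn1}, that probability is of order $q^{-n\a(3/2)^2}$, not $q^{-n\a+o(n)}$, so the sandwich only gives $q^{-(9/4)\a n+o(n)}\le m_n\le q^{-n\a+o(n)}$ and does not by itself yield $m_n=q^{-n\a+o(n)}$. The conclusion is still correct, but it requires actually estimating $\E\bigl(p_{1,2}(|\tB_{q^n}|)\bigr)$ (which concentrates near the inner edge of the plateau, giving $m_n \ge q^{-n\a}\cdot q^{-o(n)}$), or, as the proof of Lemma~\ref{lem:moddevSn1} does implicitly, using a shrinking plateau $p_{1,\eps_n}$ with $\eps_n\to0$ so that $\P(Z_v=1)=\P(|\tS_{q^n}|>1+\eps_n)=q^{-n\a(1+\eps_n)^2+o(n)}$. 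The paper simply asserts the estimate $m_n=q^{-n\a}(1+O(q^{-n}))$ as a consequence of Lemmas~\ref{lem:moddevSn1} and \ref{lem:moddevSn2} without the sandwich detour; either route works once the edge behavior of the plateau is accounted for, but the fixed threshold $3/2$ in your argument should be replaced by a threshold tending to $1$.
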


\begin{proof}
\begin{enumerate}[wide]

\item The upper bounds are obtained as for Proposition \ref{prop:treedim}. Note indeed that, for $|v| = n$,
\[
\P(Z_v > 0) \leq \P \left ( |S_n| > \s \phi(n) \right ) = O ( q^{-n \a}
\]
according to Lemma \ref{lem:moddevSn1}. The remaining of the proof is similar as for Proposition \ref{prop:treedim}, using Lemmas \ref{lem:upbound} and \ref{lem:angdevSn}.

\item To get the lower bound, let
\[
m_n = \E(Z_v), \quad |v|=n.
\]
Then Lemmas \ref{lem:moddevSn1} and \ref{lem:moddevSn2} provide
\[
m_n = \frac{1}{q^{\a n}} \left ( 1 + O \left ( q^{-n} \right ) \right )
\]
and
\[
\left | \E(Z_v Z_u) - \E(Z_v) \E(Z_u) \right | = \frac{1}{q^{2 \a n}} \left ( O \left ( q^{-n} \right ) +  O \left ( \frac{n}{q^n |v-u|} \right ) \right )
\]
for $v \neq u$. We are thus able to do the same computations as in the proof of Proposition \ref{prop:treedim}, so as to apply Lemma \ref{lem:upbound} and pick $\zeta(n) = O (1)$, which provides
\[
\dim D \geq 1 - \a \log_2 q.
\]
\end{enumerate}
\end{proof}

The remaining of the proof of Theorem \ref{th:dimSn}, as one would expect, is just copy-pasting the end of the proof of \eqref{eq:dimBn}, using the result just proven and Lemma \ref{lem:angdevSn}. The only difference is that one needs to bound, for $1 < q < 2$, $r \in \{ q^n + 1, \dots, q^{n+1} \}$,
\[
P(|S_r(i 2^{-n}) - S_{q^n}(i2^{-n})| > \eta \phi(q^n)
\]
which is readily done as in the proof of Lemma \ref{lem:moddevSn1}, by truncating at level $\log^2 n$ and using Bernstein's inequality.

\bibliographystyle{abbrv}
\bibliography{Bibli}

\begin{thebibliography}{10}

\bibitem{BenjaminiRW}
I.~Benjamini, O.~H{\"a}ggstr{\"o}m, Y.~Peres, and J.~Steif.
\newblock Which properties of a random sequence are dynamically sensitive?
\newblock {\em Ann. Probab.}, 31(1):1--34, 2003.

\bibitem{BernsteinIneq}
G.~Bennett.
\newblock Probability inequalities for the sum of independent random variables.
\newblock {\em J. Am. Stat. Assoc.}, 57(297):33--45, 1962.

\bibitem{Chatterjee}
S.~Chatterjee.
\newblock A generalization of the {L}indeberg principle.
\newblock {\em Ann. Probab.}, 34(6):2061--2076, 2006.

\bibitem{JL}
S.~Jitomirskaya and Y.~Last.
\newblock Power-law subordinacy and singular spectra i. half-line operators.
\newblock {\em Acta Math.}, 183:171--189, 1999.

\bibitem{KillipNenciu}
R.~Killip and I.~Nenciu.
\newblock Matrix models for circular ensembles.
\newblock {\em Int. Math. Res. Not.}, (50):2665--2701, 2004.

\bibitem{MP}
P.~M{\"o}rters and Y.~Peres.
\newblock {\em Brownian motion}.
\newblock Cambridge Series in Statistical and Probabilistic Mathematics.
  Cambridge University Press, Cambridge, 2010.

\bibitem{SenVirag}
A.~Sen and B.~Vir\'ag.
\newblock The top eigenvalue of the random {T}oeplitz matrix and the {S}ine
  kernel.
\newblock Available at \url{http://arxiv.org/abs/1109.5494}.

\bibitem{SimonOPUCOOF}
B.~Simon.
\newblock O{PUC} on one foot.
\newblock {\em Bull. Amer. Math. Soc. (N.S.)}, 42(4):431--460 (electronic),
  2005.

\bibitem{Simon1}
B.~Simon.
\newblock {\em Orthogonal polynomials on the unit circle, {P}art 1: Classical
  theory}, volume~54 of {\em American Mathematical Society Colloquium
  Publications}.
\newblock American Mathematical Society, Providence, RI, 2005.

\bibitem{Simon2}
B.~Simon.
\newblock {\em Orthogonal polynomials on the unit circle, {P}art 2: Spectral
  theory}, volume~54 of {\em American Mathematical Society Colloquium
  Publications}.
\newblock American Mathematical Society, Providence, RI, 2005.

\end{thebibliography}

\end{document}